\providecommand{\tabularnewline}{\\}
\providecommand{\algorithmname}{Algorithm}
\newcommand{\lyxaddress}[1]{
	\par {\raggedright #1
	\vspace{1.4em}
	\noindent\par}
}
\theoremstyle{plain}
\newtheorem{thm}{\protect\theoremname}
\theoremstyle{definition}
\newtheorem{defn}[thm]{\protect\definitionname}
\theoremstyle{plain}
\newtheorem{lem}[thm]{\protect\lemmaname}
\theoremstyle{remark}
\newtheorem{rem}[thm]{\protect\remarkname}
\theoremstyle{plain}
\newtheorem{prop}[thm]{\protect\propositionname}
\newenvironment{lyxlist}[1]
	{\begin{list}{}
		{\settowidth{\labelwidth}{#1}
		 \setlength{\leftmargin}{\labelwidth}
		 \addtolength{\leftmargin}{\labelsep}
		 }}
	{\end{list}}
\theoremstyle{remark}
\newtheorem*{acknowledgement*}{\protect\acknowledgementname}
\providecommand{\definitionname}{Definition}
\providecommand{\lemmaname}{Lemma}
\providecommand{\propositionname}{Proposition}
\providecommand{\remarkname}{Remark}
\providecommand{\theoremname}{Theorem}
\providecommand{\acknowledgementname}{Acknowledgement}
\providecommand{\definitionname}{Definition}
\providecommand{\lemmaname}{Lemma}
\providecommand{\propositionname}{Proposition}
\providecommand{\remarkname}{Remark}
\providecommand{\theoremname}{Theorem}
\begin{document}
\title{A stochastic optimization algorithm for analyzing planar central and
balanced configurations in the $n$-body problem }
\author{Alexandru Doicu$^{1}$\thanks{E-mail address: alexandru.doicu@math.uni-augsburg.de},
Lei Zhao$^{1}$\thanks{E-mail address: lei.zhao@math.uni-augsburg.de}
and Adrian Doicu$^{2}$\thanks{Corresponding author. E-mail address: adrian.doicu@dlr.de}}
\maketitle

\lyxaddress{$^{1}$Institut für Mathematik, Universität Ausgburg, Augsburg 86135,
Germany}

\lyxaddress{$^{2}$Institut für Methodik der Fernerkundung (IMF), Deutsches Zentrum
für Luft- und Raumfahrt (DLR), Oberpfaffenhofen 82234, Germany }
\begin{abstract}
A stochastic optimization algorithm for analyzing planar central and
balanced configurations in the $n$-body problem is presented. We
find a comprehensive list of equal mass central configurations satisfying
the Morse equality up to $n=12$. We show some exemplary balanced
configurations in the case $n=5$, as well as some balanced configurations
without any axis of symmetry in the cases $n=4$ and $n=10$. 
\end{abstract}

\section{Introduction}

The $n$-body problem is the problem of predicting motions of a group
of celestial objects interacting with each other gravitationally.
A central configuration is an initial configuration such that if the
particles were all released with zero velocity, they would all collapse
toward the center of mass at the same time. In the planar case, central
configurations serve as initial positions for periodic solutions which
preserve the shape of the configuration. More generally, a balanced
configuration leads to (periodic or quasi-periodic) relative equilibria
in higher dimensional Euclidean spaces.

Several fundamental studies have addressed the questions of existence,
finiteness, and classification of central configurations. In this
context, it should be pointed out that the finiteness problem was
included by Smale (1998) in his list of problems for this century.
In the case $n=3$, all central configurations are known to Euler
(1767) and Lagrange (1772). In chronological order, we mention the
following non-exhaustive list of results concerning enumeration of
central configurations:
\begin{enumerate}
\item Xia (1991) made an exact count on the number of central configurations
for the $n$-body problem with many small masses, 
\item Moeckel (2001) showed a generic finiteness of $(n-2)$-dimensional
central configurations of $n$ bodies, 
\item Hampton and Moeckel (2006) proved the finiteness for all positive
masses in the case $n=4$, 
\item Hampton and Jensen (2011) strengthened the result of Moeckel (2001)
in the case $n=5$, and
\item Albouy and Kaloshin (2012) established a generic finiteness of planar
central configurations in the case $n=5$. 
\end{enumerate}
An excellent concise survey on this topic can be found in Moeckel
(2014b). 

Aside from theoretical studies, numerical approaches for analyzing
central configurations are relevant in practice as they give an instructive
picture on this matter. The main ambition of a numerical method is
to find all (approximating) central configurations for a given number
$n$ of fixed positive masses. In this context, the following contributions
deserve to be mentioned.
\begin{enumerate}
\item Moeckel (1989) found a list of central configurations of $n$ equal
masses by using a stochastic algorithm based on the Multistart method,
i.e., by repeatedly applying the steepest descent Newton's method
with randomly chosen initial conditions.
\item Using a similar solution method (but with a root-finding routine taken
from the SLATEC library), Ferrario (2002) approximately computed all
planar central configurations with equal masses for $n\leq9$ and
found 64 central configurations in the case $n=10$. 
\item Lee and Santoprete (2009) computed all isolated\footnote{Actually all these configurations are isolated, as has been confirmed
by the study of Moczurad and Zgliczynski (2019).} central configurations of the five-body problem with equal masses.
This was accomplished by using the polyhedral homotopy method to approximate
all the isolated solutions of the Albouy-Chenciner equations. The
existence of exact solutions, in a neighborhood of the approximated
ones, was verified by using the Krawczyk operator method. 
\item Moczurad and Zgliczynski (2019) computed all planar central configurations
with equal masses for $n\leq7$. Standard interval arithmetic tools
were used in conjunction with the Krawczyk operator method to establish
the existence and local uniqueness of the solutions. As in Lee and
Santoprete (2009), they also show that there exists non-symmetric
central configurations for $n=7,8,9$. In a subsequent paper, Moczurad
and Zgliczynski (2020) presented a complete list of spatial central
configurations with equal masses for $n=5,6$, and provided their
Euclidean symmetries. 
\end{enumerate}
Adopting the solution method proposed by Moeckel (1989) and Ferrario
(2002), we develop a stochastic optimization algorithm to analyze
planar central and balanced configurations. The paper is organized
as follows. A succinct mathematical description of planar central
and balanced configurations is provided in Section 2. In Section 3,
the stochastic optimization algorithm is presented, while in Section
4, several approaches for testing the solutions are described. Numerical
results are given in Section 5, and some conclusions are summarized
in Section 6.

\section{Planar central and balanced configurations }

Consider $n$ point masses $m_{1},\ldots,m_{n}>0$ with positions
$\mathbf{q}_{1},\ldots,\mathbf{q}_{n}$, where $\mathbf{q}_{i}=(x_{i},y_{i})^{T}\in\mathbb{R}^{2}$.
The vector 
\[
\mathbf{q}=\left(\begin{array}{c}
\mathbf{q}_{1}\\
\vdots\\
\mathbf{q}_{n}
\end{array}\right)\in\mathbb{R}^{N}
\]
with $N=2n$ will be called a configuration. Let $\Delta$ be the
subspace of $\mathbb{R}^{N}$ consisting of collisions, i.e., 
\begin{align*}
\Delta & =\{\mathbf{q}=(\mathbf{q}_{1}^{T},\ldots,\mathbf{q}_{n}^{T})^{T}\mid\mathbf{q}_{i}=\mathbf{q}_{j}\text{ for some }i\not=j\}.
\end{align*}
The Newtonian force function (negative of the Newtonian potential)
$U_{n}(\mathbf{q})$ for the configuration $\mathbf{q}\in\mathbb{R}^{N}\backslash\Delta$
is defined by 
\[
U_{n}(\mathbf{q})=\sum_{1\leq i<j\leq n}\frac{m_{i}m_{j}}{||\mathbf{q}_{j}-\mathbf{q}_{i}||},
\]
where $||\cdot||$ is the Euclidean norm in $\mathbb{R}^{2}$, and
for $i\in\{1,\ldots,n\}$, we denote by $\nabla_{i}U_{n}(\mathbf{q})\in\mathbb{R}^{2}$
the derivative of $U_{n}$ with respect to the coordinates of $\mathbf{q}_{i}$,
i.e., 
\[
\nabla_{i}U_{n}(\mathbf{q})=\left(\begin{array}{c}
\dfrac{\partial U_{n}}{\partial x_{i}}(\mathbf{x})\\
\dfrac{\partial U_{n}}{\partial y_{i}}(\mathbf{x})
\end{array}\right)=\sum_{\substack{j=1\\
j\not=i
}
}^{n}\frac{m_{i}m_{j}}{||\mathbf{q}_{j}-\mathbf{q}_{i}||^{3}}(\mathbf{q}_{j}-\mathbf{q}_{i}).
\]

Let 
\[
\mathbf{c}=\Bigl(\sum_{i=1}^{n}m_{i}\Bigr)^{-1}\sum_{i=1}^{n}m_{i}\mathbf{q}_{i}
\]
be the center of mass of the configuration, and $\mathbf{S}\in\mathbb{R}^{2\times2}$
a positive definite symmetric matrix. 
\begin{defn}
\label{def:A-configuration-}A configuration $\mathbf{q}=(\mathbf{q}_{1}^{T},\ldots,\mathbf{q}_{n}^{T})^{T}\in\mathbb{R}^{N}\backslash\Delta$
is said to form a balanced configuration with respect to the matrix
$\mathbf{S}$ (in short $\text{BC}(\mathsf{\mathbf{S}})$) if there
exists a $\lambda\in\mathbb{R}\backslash\{0\}$ such that the equations
\begin{equation}
\nabla_{i}U_{n}(\mathbf{q})+m_{i}\lambda\mathbf{S}(\mathbf{q}_{i}-\mathbf{c})=\mathbf{0},\label{eq:1}
\end{equation}
are satisfied for all $i=1,\ldots,n$. A configuration $\mathbf{q}=(\mathbf{q}_{1}^{T},\ldots,\mathbf{q}_{n}^{T})^{T}\in\mathbb{R}^{N}\backslash\Delta$
is said to form a central configuration (in short CC) if there exists
a $\lambda\in\mathbb{R}\backslash\{0\}$ for which Eqs. (\ref{eq:1})
are satisfied with $\mathbf{S}=\mathbf{I}_{2\times2}$. 
\end{defn}

As such, central configurations are special cases of balanced configurations.
As described by Albouy and Chenciner (1998), balanced configurations
are those configurations which admit (in general quasi-periodic) relative
equilibrium motions of the $n$-body problem in some Euclidean space
of dimension high enough. Central configurations are those balanced
configurations for which the corresponding relative equilibrium motions
are periodic.

Consider the diagonal action of $\text{O}(2)$ on $\mathbb{R}^{N}$,
defined by 
\begin{align}
\text{O}(2)\times\mathbb{R}^{N}\backslash\Delta & \rightarrow\mathbb{R}^{N}\backslash\Delta\nonumber \\
(\mathbf{O},\mathbf{q}) & \mapsto\mathbf{O}\mathbf{q},\label{eq:1a}
\end{align}
where
\[
\mathbf{O}\mathbf{q}=\left(\begin{array}{c}
\mathbf{O}\mathbf{q}_{1}\\
\vdots\\
\mathbf{O}\mathbf{q}_{n}
\end{array}\right)
\]

\begin{lem}
\label{lem:Let--be}Let $\mathbf{q}\in\mathbb{R}^{N}\backslash\Delta$
be a \emph{$\text{BC}(\mathbf{S})$} and $\mathbf{O}\in\text{O}(2)$
an orthogonal matrix. Then $\mathbf{O}\mathbf{q}$ is a \emph{$\text{BC}(\mathbf{O}\mathbf{S}\mathbf{O}^{T})$.} 
\end{lem}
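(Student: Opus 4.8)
The plan is to verify directly that $\mathbf{O}\mathbf{q}$ satisfies Eqs.~(\ref{eq:1}) with the matrix $\mathbf{O}\mathbf{S}\mathbf{O}^{T}$ and the \emph{same} scalar $\lambda$ that witnesses $\mathbf{q}$ being a $\text{BC}(\mathbf{S})$. Three elementary equivariance observations drive everything, and each of them uses only that $\mathbf{O}$ is linear, invertible, and norm-preserving.

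First, I would record that $\mathbf{O}\mathbf{q}\in\mathbb{R}^{N}\backslash\Delta$: since $\mathbf{O}$ is invertible, $\mathbf{O}\mathbf{q}_{i}=\mathbf{O}\mathbf{q}_{j}$ forces $\mathbf{q}_{i}=\mathbf{q}_{j}$, so no new collisions are created and $U_{n}(\mathbf{O}\mathbf{q})$ is well defined. I would also note that $\mathbf{O}\mathbf{S}\mathbf{O}^{T}$ is again symmetric and positive definite, hence an admissible matrix in the sense of Definition~\ref{def:A-configuration-}.

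Second, I would compute how the two ingredients of Eq.~(\ref{eq:1}) transform. By linearity, the center of mass of $\mathbf{O}\mathbf{q}$ equals $\mathbf{O}\mathbf{c}$, since $\bigl(\sum_{i}m_{i}\bigr)^{-1}\sum_{i}m_{i}\mathbf{O}\mathbf{q}_{i}=\mathbf{O}\mathbf{c}$. For the force, I would insert $\mathbf{O}\mathbf{q}$ into the explicit formula for $\nabla_{i}U_{n}$ and use $\|\mathbf{O}\mathbf{q}_{j}-\mathbf{O}\mathbf{q}_{i}\|=\|\mathbf{q}_{j}-\mathbf{q}_{i}\|$ together with the linearity of $\mathbf{O}$ to pull $\mathbf{O}$ out of the sum, obtaining the equivariance $\nabla_{i}U_{n}(\mathbf{O}\mathbf{q})=\mathbf{O}\,\nabla_{i}U_{n}(\mathbf{q})$. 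This is the single key identity, and although it is a one-line computation, it is the only place where the hypothesis $\mathbf{O}\in\text{O}(2)$ is genuinely used.

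Third, I would assemble the balanced-configuration equation for $\mathbf{O}\mathbf{q}$. Writing $(\mathbf{O}\mathbf{q})_{i}-\mathbf{O}\mathbf{c}=\mathbf{O}(\mathbf{q}_{i}-\mathbf{c})$ and using $\mathbf{O}^{T}\mathbf{O}=\mathbf{I}$, the restoring term becomes $m_{i}\lambda(\mathbf{O}\mathbf{S}\mathbf{O}^{T})\mathbf{O}(\mathbf{q}_{i}-\mathbf{c})=m_{i}\lambda\,\mathbf{O}\mathbf{S}(\mathbf{q}_{i}-\mathbf{c})$, while the gradient term is $\mathbf{O}\,\nabla_{i}U_{n}(\mathbf{q})$. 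Factoring the common $\mathbf{O}$ out of their sum yields $\mathbf{O}\bigl[\nabla_{i}U_{n}(\mathbf{q})+m_{i}\lambda\mathbf{S}(\mathbf{q}_{i}-\mathbf{c})\bigr]$, which vanishes because $\mathbf{q}$ is a $\text{BC}(\mathbf{S})$; hence $\mathbf{O}\mathbf{q}$ is a $\text{BC}(\mathbf{O}\mathbf{S}\mathbf{O}^{T})$. There is no real obstacle: the entire content is organizing the cancellation $\mathbf{O}^{T}\mathbf{O}=\mathbf{I}$ and the gradient equivariance so that one factor $\mathbf{O}$ can be removed from the whole left-hand side.
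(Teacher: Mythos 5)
Your proposal is correct and follows essentially the same route as the paper's proof: both multiply the balanced-configuration equations by $\mathbf{O}$, use the isometry and linearity of $\mathbf{O}$ to get gradient and center-of-mass equivariance, and insert $\mathbf{O}^{T}\mathbf{O}=\mathbf{I}$ to convert the restoring term to one involving $\mathbf{O}\mathbf{S}\mathbf{O}^{T}$. Your additional observations (no new collisions, and $\mathbf{O}\mathbf{S}\mathbf{O}^{T}$ remaining symmetric positive definite) are points the paper leaves implicit, but the core argument is identical.
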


\begin{proof}
For $\mathbf{O}\in\text{O}(2)$ and every $i\in\{1,\ldots,n\}$, we
define a new configuration $\widehat{\mathbf{q}}_{i}$ by $\widehat{\mathbf{q}}_{i}=\mathbf{O}\mathbf{q}_{i}$.
Taking into account that $\mathbf{q}$ solves the system of equations
\begin{equation}
\sum_{j\not=i}\frac{m_{i}m_{j}}{||\mathbf{q}_{j}-\mathbf{q}_{i}||^{3}}(\mathbf{q}_{j}-\mathbf{q}_{i})+m_{i}\lambda\mathbf{S}(\mathbf{q}_{i}-\mathbf{c})=\boldsymbol{0},\quad i=1,\ldots,N,\label{eq:2}
\end{equation}
for some $\lambda\in\mathbb{R}\backslash\{0\}$, we multiply each
equation from the left by $\mathbf{O}$, and obtain 
\begin{align*}
\boldsymbol{0} & =\sum_{\substack{j=1\\
j\not=i
}
}^{n}\frac{m_{i}m_{j}}{||\widehat{\mathbf{q}}_{j}-\widehat{\mathbf{q}}_{i}||^{3}}(\widehat{\mathbf{q}}_{j}-\widehat{\mathbf{q}}_{i})+m_{i}\lambda\mathbf{O}\mathbf{S}(\mathbf{q}_{i}-\mathbf{c})\\
 & =\sum_{\substack{j=1\\
j\not=i
}
}^{n}\frac{m_{i}m_{j}}{||\widehat{\mathbf{q}}_{j}-\widehat{\mathbf{q}}_{i}||^{3}}(\widehat{\mathbf{q}}_{j}-\widehat{\mathbf{q}}_{i})+m_{i}\lambda\mathbf{O}\mathbf{S}\mathbf{O}^{T}(\widehat{\mathbf{q}}_{i}-\widehat{\mathbf{c}}),\quad i=1,\ldots,N,
\end{align*}
where $\widehat{\mathbf{c}}$, defined by $\widehat{\mathbf{c}}=\mathbf{O}\mathbf{c}$,
is the center of mass of $\widehat{\mathbf{q}}$. Thus, if $\mathbf{q}$
forms a $\text{BC}(\mathbf{S})$ then $\widehat{\mathbf{q}}=(\widehat{\mathbf{q}}_{1}^{T},\ldots,\widehat{\mathbf{q}}_{n}^{T})^{T}$
forms a $\text{BC}(\mathbf{O}\mathbf{S}\mathbf{O}^{T})$. 
\end{proof}
\begin{rem}
\label{rem:In-general,-note}The following comments can be made here.
\begin{enumerate}
\item If $\mathbf{q}\in\mathbb{R}^{N}\backslash\Delta$ forms a central
configuration and $\mathbf{O}\in\text{O}(2)$ is an orthogonal matrix,
then by Lemma \ref{lem:Let--be}, $\mathbf{O}\mathbf{q}$ is also
a central configuration. 
\item From Lemma \ref{lem:Let--be} we may assume that the positive definite
$2\times2$ matrix $\mathbf{S}$ is diagonal, i.e., 
\begin{equation}
\mathbf{S}=\left(\begin{array}{cc}
\sigma_{x} & 0\\
0 & \sigma_{y}
\end{array}\right)\label{eq:6}
\end{equation}
with $\sigma_{x},\sigma_{y}>0$. Indeed, if $\mathbf{q}$ is a $\text{BC}(\mathbf{S})$
and $\mathbf{O}\in\text{O}(2)$ is an orthogonal matrix such that
$\mathbf{O}\mathbf{S}\mathbf{O}^{T}=\text{diag}(\sigma_{x},\sigma_{y})$,
then $\mathbf{O}\mathbf{q}$ is a $\text{BC}(\mathbf{O}\mathbf{S}\mathbf{O}^{T})=\text{BC}(\text{diag}(\sigma_{x},\sigma_{y}))$.
It is obvious that the converse result also holds. Thus, the set of
$\text{BC}(\mathbf{S})$ in $\mathbb{R}^{N}\backslash\Delta$ corresponds
one to one with the set of $\text{BC}(\text{diag}(\sigma_{x},\sigma_{y}))$.
This bijection is given by the action defined in (\ref{eq:1a}). 
\end{enumerate}
\end{rem}

\begin{rem}
\label{rem:Lemma--treats}Lemma \ref{lem:Let--be} only treats rotations
and reflexions of balanced configurations. It is also possible to
dilate a balanced configuration. Let $\mathbf{q}$ be a configuration
that forms a $\text{BC}(\mathbf{S})$ with respect to some $\lambda\in\mathbb{R}\backslash\{0\}$
and let $\varsigma>0$ be some positive real number. Then $\varsigma\mathbf{q}=(\varsigma\mathbf{q}_{1}^{T},\ldots,\varsigma\mathbf{q}_{n}^{T})^{T}$
also forms a $\text{BC}(\mathbf{S})$ with respect to $\varsigma^{-3}\lambda$. 
\end{rem}

Thanks to Remark \ref{rem:In-general,-note} we may assume that the
matrix $\mathbf{S}$ is diagonal, i.e., $\mathbf{S}$ is as in Eq.
(\ref{eq:6}) with $\sigma_{x},\sigma_{y}>0$. In the following, we
use the results established by Moeckel (2014a) to introduce a Morse
theoretical approach to Eq. (\ref{eq:1}) and a notion of non-degeneracy
for balanced configurations.

For $\boldsymbol{\xi},\boldsymbol{\eta}\in\mathbb{R}^{2}$, we define
their inner product with respect to the positive definite diagonal
matrix $\mathbf{S}$ by 
\[
\bigl\langle\boldsymbol{\xi},\boldsymbol{\eta}\bigr\rangle_{\mathbf{S}}:=\boldsymbol{\xi}^{T}\mathbf{S}\boldsymbol{\eta}\quad||\boldsymbol{\xi}||_{\mathbf{S}}^{2}=\boldsymbol{\xi}^{T}\mathbf{S}\mathbf{\boldsymbol{\xi}},
\]
and accordingly, the \emph{$\mathbf{S}$-}weighted moment of inertia
by 
\[
I_{\mathbf{S}}(\mathbf{q})=\sum_{j=1}^{n}m_{j}(\mathbf{q}_{j}-\mathbf{c})^{T}\mathbf{S}(\mathbf{q}_{j}-\mathbf{c})=\sum_{j=1}^{n}m_{j}||\mathbf{q}_{j}-\mathbf{c}||_{\mathbf{S}}^{2}.
\]

\begin{rem}
Assume that the configuration $\mathbf{q}$ is a $\text{BC}(\mathbf{S})$.
By taking the inner product of Eq. (\ref{eq:1}) with $\mathbf{q}_{i}-\mathbf{c}$
and summing up over all $i=1,\ldots,n$ , we obtain 
\[
\sum_{i=1}^{n}(\mathbf{q}_{i}-\mathbf{c})^{T}\nabla_{i}U_{n}(\mathbf{q})+\lambda I_{\mathbf{S}}(\mathbf{q})=0.
\]
Thus, 
\[
\lambda=-\frac{1}{I_{\mathbf{S}}(\mathbf{q})}\sum_{i=1}^{n}(\mathbf{q}_{i}-\mathbf{c})^{T}\nabla_{i}U_{n}(\mathbf{q})=\frac{U_{n}(\mathbf{q})}{I_{\mathbf{S}}(\mathbf{q})}>0.
\]
The last equality follows from the translation invariance of $U_{n}$
and Euler's homogeneous function theorem. From this result it is apparent
that in the definition of balanced configurations, the parameter $\lambda$
cannot be chosen arbitrary; it depends on $\mathbf{q}$ and $\mathbf{S}$. 
\end{rem}

We define the \emph{$\mathbf{S}$-}normalized configuration space
$\mathcal{N}(\mathbf{S})$ as 
\[
\mathcal{N}(\mathbf{S})=\{\mathbf{q}\in\mathbb{R}^{N}\backslash\Delta\mid\mathbf{c}(\mathbf{q})=0,\,I_{\mathbf{S}}(\mathbf{q})=1\}\subset\mathbb{R}^{N}.
\]

\begin{rem}
\label{rem:Starting-with-a}Starting from a $\text{BC}(\mathbf{S})$,
it is possible to normalize this configuration so that the new configuration
is a $\text{BC}(\mathbf{S})$ in $\mathcal{N}(\mathbf{S})$. Indeed,
assume that $\mathbf{q}$ is a $\text{BC}(\mathbf{S})$ with center
of mass $\mathbf{c}$, i.e. the equation 
\[
\underbrace{\sum_{\substack{j=1\\
j\not=i
}
}^{n}\frac{m_{i}m_{j}}{||\mathbf{q}_{j}-\mathbf{q}_{i}||^{3}}(\mathbf{q}_{j}-\mathbf{q}_{i})}_{=\nabla_{i}U_{n}(\mathbf{q})}+m_{i}\lambda\mathbf{S}(\mathbf{q}_{i}-\mathbf{c})=\boldsymbol{0},
\]
is satisfied for all $i=1,\ldots,n$. The configuration $\mathbf{q}$
can be normalized to $\widetilde{\mathbf{q}}\in\mathcal{N}(\mathbf{S})$
by means of the following procedure. Setting $\widetilde{\mathbf{q}}_{i}=\sqrt{1/I_{\mathbf{S}}(\mathbf{q})}(\mathbf{q}_{i}-\mathbf{c})$,
we find 
\[
\nabla_{i}U_{n}(\widetilde{\mathbf{q}})=I_{\mathbf{S}}(\mathbf{q})\nabla_{i}U_{n}(\mathbf{q}).
\]
Consequently, we obtain 
\begin{align*}
\boldsymbol{0}=I_{\mathbf{S}}(\mathbf{q})[\nabla_{i}U_{n}(\mathbf{q})+m_{i}\lambda\mathbf{S}(\mathbf{q}_{i}-\mathbf{c})] & =\nabla_{i}U_{n}(\widetilde{\mathbf{q}})+m_{i}\underbrace{I_{\mathbf{S}}(\mathbf{q})^{\frac{3}{2}}\lambda}_{=\widetilde{\lambda}(\widetilde{\mathbf{q}})=\widetilde{\lambda}}\mathbf{S}\widetilde{\mathbf{q}}_{i},
\end{align*}
and further, 
\[
\frac{U_{n}(\widetilde{\mathbf{q}})}{I_{\mathbf{S}}(\widetilde{\mathbf{q}})}=\widetilde{\lambda}=I_{\mathbf{S}}(\mathbf{q})^{\frac{3}{2}}\lambda=I_{\mathbf{S}}(\mathbf{q})^{\frac{1}{2}}U_{n}(\mathbf{q}).
\]
On the other hand, we have 
\begin{align*}
\widetilde{\mathbf{c}} & =\Bigl(\sum_{j=1}^{n}m_{j}\Bigr)^{-1}\Bigl(\sum_{j=1}^{n}m_{j}\widetilde{\mathbf{q}}_{j}\Bigr)\\
 & =\Bigl(\sum_{j=1}^{n}m_{j}\Bigr)^{-1}\Bigl(\sum_{j=1}^{n}m_{j}\sqrt{\frac{1}{I_{\mathbf{S}}(\mathbf{q})}}(\mathbf{q}_{j}-\mathbf{c})\Bigr)\\
 & =\sqrt{\frac{1}{I_{\mathbf{S}}(\mathbf{q})}}\Bigl(\sum_{j=1}^{n}m_{j}\Bigr)^{-1}\underbrace{\Bigl(\sum_{j=1}^{n}m_{j}(\mathbf{q}_{j}-\mathbf{c})\Bigr)}_{=\boldsymbol{0}}\\
 & =\boldsymbol{0},
\end{align*}
and 
\[
I_{\mathbf{S}}(\widetilde{\mathbf{q}})=\sum_{j=1}^{n}m_{j}\widetilde{\mathbf{q}}_{j}^{T}\mathbf{S}\widetilde{\mathbf{q}}_{j}=\frac{1}{I_{\mathbf{S}}(\mathbf{q})}\sum_{j=1}^{n}m_{j}(\mathbf{q}_{j}-\mathbf{c})^{T}\mathbf{S}(\mathbf{q}_{j}-\mathbf{c})=1.
\]
Thus, $\widetilde{\mathbf{q}}$ is a $\text{BC}(\mathbf{S})$ with
the parameter $\widetilde{\lambda}=U_{n}(\widetilde{\mathbf{q}})>0$,
and has the center of mass $\widetilde{\mathbf{c}}=0$ and the $\mathbf{S}$-weighted
moment of inertia $I_{\mathbf{S}}(\widetilde{\mathbf{q}})=1$. Therefore,
$\widetilde{\mathbf{q}}\in\text{\ensuremath{\mathcal{N}(\mathbf{S})}}$.
Explicitly this means, 
\begin{equation}
\sum_{\substack{j=1\\
j\not=i
}
}^{n}\frac{m_{j}}{||\widetilde{\mathbf{q}}_{j}-\widetilde{\mathbf{q}}_{i}||^{3}}(\widetilde{\mathbf{q}}_{j}-\widetilde{\mathbf{q}}_{i})+U_{n}(\widetilde{\mathbf{q}})\mathbf{S}\widetilde{\mathbf{q}}_{i}=\boldsymbol{0},\label{eq:A1}
\end{equation}
for $i=1,\ldots,n$, and 
\begin{equation}
\sum_{j=1}^{n}m_{j}\widetilde{\mathbf{q}}_{j}^{T}\mathbf{S}\widetilde{\mathbf{q}}_{j}=1.\label{eq:A2}
\end{equation}
\end{rem}

Let $V:\mathcal{N}(\mathbf{S})\rightarrow\mathbb{R}$ be the restriction
of $U_{n}$ to the manifold $\mathcal{N}(\mathbf{S})$. From Moeckel
(2014a) we have the following result. 
\begin{prop}
\label{prop:Let--be}Assume that $\mathbf{S}$ is a positive definite
symmetric $2\times2$ matrix. Then a configuration $\mathbf{q}$ is
a \emph{$\text{BC}(\mathbf{S})$} if and only if its corresponding
normalized configuration $\widetilde{\mathbf{q}}\in\mathcal{N}(\mathbf{S})$
(as in Remark \ref{rem:Starting-with-a}) is a critical point of $\widetilde{U}_{n}=U_{n}|_{\mathcal{N}(\mathbf{S})}:\mathcal{N}(\mathbf{S})\rightarrow\mathbb{R}$. 
\end{prop}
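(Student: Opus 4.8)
\emph{Proof proposal.} The plan is to realize $\mathcal{N}(\mathbf{S})$ as a regular constraint manifold and apply the method of Lagrange multipliers to $U_n$. Concretely, $\mathcal{N}(\mathbf{S})$ is the common zero set of the three scalar functions given by the two components of $\mathbf{b}(\mathbf{q}):=\sum_{k=1}^{n}m_k\mathbf{q}_k$ (so that $\mathbf{c}=\boldsymbol{0}\iff\mathbf{b}=\boldsymbol{0}$) and of $h(\mathbf{q}):=I_{\mathbf{S}}(\mathbf{q})-1$. The first step is the constraint qualification: at every $\mathbf{q}\in\mathcal{N}(\mathbf{S})$ the gradients of these three functions are linearly independent in $\mathbb{R}^N$. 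Writing $\mathbf{e}_1,\mathbf{e}_2$ for the standard basis of $\mathbb{R}^2$, one has $\nabla_i b_\alpha=m_i\mathbf{e}_\alpha$ and (see below) $\nabla_i h=2m_i\mathbf{S}(\mathbf{q}_i-\mathbf{c})$, so a vanishing combination $a\nabla b_1+b\nabla b_2+c\nabla h=\boldsymbol{0}$ forces $a\mathbf{e}_1+b\mathbf{e}_2+2c\mathbf{S}(\mathbf{q}_i-\mathbf{c})=\boldsymbol{0}$ for every $i$; if $c\neq0$, invertibility of $\mathbf{S}$ makes all $\mathbf{q}_i-\mathbf{c}$ equal, i.e. a total collision, contradicting $I_{\mathbf{S}}(\mathbf{q})=1>0$. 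Hence $c=0$ and then $a=b=0$, so $\mathcal{N}(\mathbf{S})$ is a smooth submanifold of codimension three and the Lagrange theorem applies.

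Next I would record the two gradient computations. The center-of-mass gradients $\nabla_i b_\alpha=m_i\mathbf{e}_\alpha$ are immediate. For the inertia gradient I would differentiate $I_{\mathbf{S}}(\mathbf{q})=\sum_j m_j(\mathbf{q}_j-\mathbf{c})^T\mathbf{S}(\mathbf{q}_j-\mathbf{c})$ with respect to $\mathbf{q}_i$, tracking the dependence of $\mathbf{c}$ on $\mathbf{q}_i$; the contribution routed through $\mathbf{c}$ is proportional to $\sum_j m_j(\mathbf{q}_j-\mathbf{c})=\boldsymbol{0}$ and drops out, leaving $\nabla_i I_{\mathbf{S}}(\mathbf{q})=2m_i\mathbf{S}(\mathbf{q}_i-\mathbf{c})$. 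This is the only genuine computation in the argument.

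Now the core step. By the multiplier theorem, $\widetilde{\mathbf{q}}\in\mathcal{N}(\mathbf{S})$ is a critical point of $\widetilde{U}_n$ exactly when $\nabla U_n(\widetilde{\mathbf{q}})$ lies in the normal space, i.e. there are $\mu\in\mathbb{R}$ and $\boldsymbol{\nu}\in\mathbb{R}^2$ with
\[
\nabla_i U_n(\widetilde{\mathbf{q}})=2\mu\,m_i\mathbf{S}\widetilde{\mathbf{q}}_i+m_i\boldsymbol{\nu},\quad i=1,\ldots,n,
\]
using $\mathbf{c}=\boldsymbol{0}$ on $\mathcal{N}(\mathbf{S})$. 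The point I expect to be the crux is eliminating the spurious translational multiplier $\boldsymbol{\nu}$: summing over $i$ and using translation invariance of $U_n$ (so $\sum_i\nabla_i U_n=\boldsymbol{0}$, which also follows by pairwise cancellation in the explicit force sum) together with $\sum_i m_i\widetilde{\mathbf{q}}_i=\boldsymbol{0}$ leaves $(\sum_i m_i)\boldsymbol{\nu}=\boldsymbol{0}$, hence $\boldsymbol{\nu}=\boldsymbol{0}$. The surviving relation $\nabla_i U_n(\widetilde{\mathbf{q}})+m_i(-2\mu)\mathbf{S}\widetilde{\mathbf{q}}_i=\boldsymbol{0}$ is precisely Eq.~(\ref{eq:A1})/(\ref{eq:1}) with $\lambda=-2\mu$; taking the inner product with $\widetilde{\mathbf{q}}_i$, summing, and invoking Euler's theorem for the degree $-1$ homogeneous $U_n$ identifies $\lambda=U_n(\widetilde{\mathbf{q}})>0$, so in particular $\lambda\neq0$.

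Finally I would close both directions and handle the normalization bookkeeping. The relation just derived shows ``critical point $\Rightarrow\text{BC}(\mathbf{S})$'' in normalized form. Conversely, if $\mathbf{q}$ is a $\text{BC}(\mathbf{S})$ then by Remark~\ref{rem:Starting-with-a} its normalization $\widetilde{\mathbf{q}}$ lies in $\mathcal{N}(\mathbf{S})$ and satisfies Eq.~(\ref{eq:A1}), i.e. $\nabla_i U_n(\widetilde{\mathbf{q}})=-m_i U_n(\widetilde{\mathbf{q}})\,\mathbf{S}\widetilde{\mathbf{q}}_i$; this exhibits $\nabla U_n(\widetilde{\mathbf{q}})$ as the multiple $\mu=-U_n(\widetilde{\mathbf{q}})/2$ of $\nabla I_{\mathbf{S}}$ (with $\boldsymbol{\nu}=\boldsymbol{0}$), hence as an element of the normal space, so $\widetilde{\mathbf{q}}$ is a critical point of $\widetilde{U}_n$. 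The stated equivalence for $\mathbf{q}$ itself then follows because normalization is a translation composed with a dilation, and being a $\text{BC}(\mathbf{S})$ is preserved under both (the equations depend only on the differences $\mathbf{q}_j-\mathbf{q}_i$ and on $\mathbf{q}_i-\mathbf{c}$, with the dilation rescaling $\lambda$ as in Remark~\ref{rem:Lemma--treats}); thus $\mathbf{q}$ is a $\text{BC}(\mathbf{S})$ if and only if $\widetilde{\mathbf{q}}$ is.
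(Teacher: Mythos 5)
Your proof is correct, but it cannot be compared line-by-line with the paper's, because the paper offers none: Proposition \ref{prop:Let--be} is quoted from Moeckel (2014a), and the only ingredient the paper develops itself is the normalization computation of Remark \ref{rem:Starting-with-a}, which you correctly reuse for the direction ``$\text{BC}(\mathbf{S})$ implies critical point.'' Your Lagrange-multiplier argument is therefore a self-contained replacement for the citation, following the standard route. The two substantive steps you add are sound: (i) the constraint qualification along $\mathcal{N}(\mathbf{S})$ --- your observation that a linear dependency among the gradients of the center-of-mass constraints and of $I_{\mathbf{S}}-1$ would force, via the invertibility of $\mathbf{S}$, all $\mathbf{q}_i-\mathbf{c}$ to coincide, contradicting $I_{\mathbf{S}}(\mathbf{q})=1$, is exactly right; and (ii) the elimination of the translational multiplier $\boldsymbol{\nu}$ by summing over $i$ and using $\sum_i\nabla_iU_n=\boldsymbol{0}$ together with $\sum_im_i\widetilde{\mathbf{q}}_i=\boldsymbol{0}$. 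The latter step is genuinely needed, since Definition \ref{def:A-configuration-} demands $\lambda\neq0$ while the bare critical-point condition only produces some multiplier $\mu\in\mathbb{R}$; your identification $\lambda=-2\mu=U_n(\widetilde{\mathbf{q}})>0$, obtained from homogeneity of degree $-1$ and $I_{\mathbf{S}}(\widetilde{\mathbf{q}})=1$, closes that loophole (and reproduces the computation the paper records in the remark following Definition \ref{def:A-configuration-}). Your final reduction of the statement about $\mathbf{q}$ to the statement about $\widetilde{\mathbf{q}}$, via translation invariance and the dilation property of Remark \ref{rem:Lemma--treats}, is also correct, so the proposal stands as a complete proof of the quoted result.
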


Proposition \ref{prop:Let--be} enlightens the fact that the dilation
freedom of finding balanced configurations, discussed in Remark \ref{rem:Lemma--treats},
can be suppressed by looking for balanced configurations as critical
points of $U_{n}$ on the manifold $\mathcal{N}(\mathbf{S})$. However,
the solutions of balanced configurations (or central configurations)
on $\mathcal{N}(\mathbf{S})$ may not be isolated. In this case we
introduce a notion of non-degenerate critical point.

For $\widetilde{\mathbf{q}}\in\text{Crit}(\widetilde{U}_{n})$ there
exists a Hessian quadratic form on $T_{\widetilde{\mathbf{q}}}\mathcal{N}(\mathbf{S})$
given locally by a symmetric matrix $\mathbf{H}(\widetilde{\mathbf{q}})=\mathbf{v}^{T}D^{2}\widetilde{U}_{n}(\widetilde{\mathbf{q}})\mathbf{v}$
for $\mathbf{v}\in T_{\widetilde{\mathbf{q}}}\mathcal{N}(S)$. The
nullity at a critical point is defined as $\text{null}(\widetilde{\mathbf{q}}):=\dim(\ker(H(\widetilde{\mathbf{q}})))$.
Instead of working in local coordinates on the manifold $\mathcal{N}(\mathbf{S})$,
we represent the Hessian by a $N\times N$ matrix (also called $\mathbf{H}(\widetilde{\mathbf{q}})$),
whose restriction to $T_{\widetilde{\mathbf{q}}}\mathcal{N}(\mathbf{S})$
gives the correct values. From Moeckel (2014a), the Hessian of $\widetilde{U}_{n}:\mathcal{N}(\mathbf{S})\rightarrow\mathbb{R}$
at a critical point $\widetilde{\mathbf{q}}\in\text{Crit}(\widetilde{U}_{n})$
is given by $\mathbf{H}(\widetilde{\mathbf{q}})\mathbf{v}=\mathbf{v}^{T}\mathbf{H}(\widetilde{\mathbf{q}})\mathbf{v}$,
where 
\begin{equation}
\mathbf{H}(\widetilde{\mathbf{q}})=D^{2}U_{n}(\widetilde{\mathbf{q}})+U_{n}(\widetilde{\mathbf{q}})\widehat{\mathbf{S}}\mathbf{M}\label{eq:A3}
\end{equation}
and 
\[
\widehat{\mathbf{S}}=\underbrace{\left(\begin{array}{ccc}
\mathbf{S} & \cdots & 0\\
\vdots & \ddots & \vdots\\
0 & \cdots & \mathbf{S}
\end{array}\right)}_{n\text{ blocks}},\quad\mathbf{M}=\left(\begin{array}{cccccc}
m_{1}\\
 & m_{1} &  &  & \boldsymbol{0}\\
 &  &  & \ddots\\
 & \boldsymbol{0} &  &  & m_{n}\\
 &  &  &  &  & m_{n}
\end{array}\right).
\]

If $\sigma_{x}=\sigma_{y}$, the normalized configuration space $\mathcal{N}(\mathbf{S})$
carries an $\text{O}(2)$-action. Hence, since $\widetilde{U}_{n}$
is $\text{O}(2)$-invariant it descends to a function $\widehat{U}_{n}:\mathcal{N}(\mathbf{S})/\text{O}(2)\rightarrow\mathbb{R}$.
For $\widetilde{\mathbf{q}}\in\mathcal{N}(\mathbf{S})$, we have 
\[
T_{[\widetilde{\mathbf{q}}]}\left(\mathcal{N}(\mathbf{S})/\text{O}(2)\right)\cong T_{\widetilde{\mathbf{q}}}\mathcal{N}(\mathbf{S})/T_{\widetilde{\mathbf{q}}}(\text{O}(2)\widetilde{\mathbf{q}}),
\]
where $[\widetilde{\mathbf{q}}]\in\mathcal{N}(\mathbf{S})/\text{O}(2)$
represents the equivalence class of $\widetilde{\mathbf{q}}$. If
$\widetilde{\mathbf{q}}\in\mathcal{N}(\mathbf{S})$ is a critical
point of $\widetilde{U}_{n}$, then $\text{null}(\widetilde{\mathbf{q}})\geq1$
and the equivalence class $[\widetilde{\mathbf{q}}]\in\mathcal{N}(\mathbf{S})/\text{O}(2)$
is a critical point of $\widehat{U}_{n}$. Then the Hessian $\widehat{\mathbf{H}}([\widehat{\mathbf{q}}])$
of $\widehat{U}_{n}$ at $[\widetilde{\mathbf{q}}]$ is obtained by
descending $\mathbf{H}(\widetilde{\mathbf{q}})$ to the space $T_{[\widetilde{\mathbf{q}}]}\left(\mathcal{N}(\mathbf{S})/\text{O}(2)\right)$.
More precisely, if $[\mathbf{v}],[\mathbf{w}]\in T_{[\widetilde{\mathbf{q}}]}\left(\mathcal{N}(\mathbf{S})/\text{O}(2)\right)$,
where $\mathbf{v},\mathbf{w}\in T_{\widetilde{\mathbf{q}}}\mathcal{N}(\mathbf{S})$,
then $\widehat{\mathbf{H}}([\widetilde{\mathbf{q}}])([\mathbf{v}],[\mathbf{w}])=\mathbf{H}(\widetilde{\mathbf{q}})(\mathbf{v},\mathbf{w})$. 

The non-degeneracy of a critical point is defined as follows. 
\begin{defn}
\label{def:Let-.-A}Let $\mathbf{q}$ be a $\text{BC}(\mathbf{S})$
with $\mathbf{S}=\text{diag}(\sigma_{x},\sigma_{y})$. The configuration
$\mathbf{q}$ is called non-degenerate if one of the following cases
hold. 
\begin{lyxlist}{00.00.0000}
\item [{Case$\ $1:}] If $\sigma_{x}=\sigma_{y}$, then the Hessian $\widehat{\mathbf{H}}([\widetilde{\mathbf{q}}])$
is non-degenerate, where $\widetilde{\mathbf{q}}$ represents the
corresponding normalized configuration of $\mathbf{q}$. 
\item [{Case$\ $2:}] If $\sigma_{x}\not=\sigma_{y}$, then the Hessian
$\mathbf{H}(\widetilde{\mathbf{q}})$ is non-degenerate, where $\widetilde{\mathbf{q}}$
represents the corresponding normalized configuration of $\mathbf{q}$. 
\end{lyxlist}
\end{defn}

It should be pointed out that if $\mathbf{\widetilde{q}}=(\mathbf{\widetilde{q}}_{1}^{T},\ldots,\mathbf{\widetilde{q}}_{n}^{T})^{T}$
is a (normalized) central configuration, then the mutual distances
$R_{ij}=||\widetilde{\mathbf{q}}_{j}-\widetilde{\mathbf{q}}_{i}||$
satisfy the Albouy-Chenciner equations (Albouy and Chenciner 1998),
\begin{equation}
f_{ij}(\mathbf{R}):=\sum_{k=1}^{n}m_{k}[S_{ik}(R_{jk}^{2}-R_{ik}^{2}-R_{ij}^{2})+S_{jk}(R_{ik}^{2}-R_{jk}^{2}-R_{ij}^{2})]=0,\label{eq:ALBOUYEq}
\end{equation}
for all $1\leq i<j\leq n$, where $\mathbf{R}=(R_{12},R_{13},\ldots,R_{n-1.n})^{T}$
and
\begin{align}
S_{ij} & =\left\{ \begin{array}{c}
1/R_{ij}^{3}+\lambda^{\prime},\\
0,
\end{array}\begin{array}{c}
i\neq j\\
i=j
\end{array}\right.,\label{eq:ALBOUY1}\\
\lambda^{\prime} & =-U_{n}(\widetilde{\mathbf{q}})/\sum_{k=1}^{n}m_{k}.\label{eq:ALBOUY2}
\end{align}
Conversely, if the quantities $R_{ij}$ are the mutual distances of
some configuration $\mathbf{\widetilde{q}}$ and they satisfy Eq.
(\ref{eq:ALBOUYEq}), then the configuration is central (Albouy and
Chenciner 1998). For a detailed derivation of the Albouy-Chenciner
equations we refer to Albouy and Chenciner (1998), and Hampton and
Moeckel (2006). 

In the forthcoming analysis, we will consider only normalized configurations
and renounce on the tilde character (\textasciitilde ). 

\section{Stochastic optimization algorithm}

Consider the system of nonlinear equations 
\begin{equation}
\mathbf{f}(\mathbf{q})=\boldsymbol{0}\label{eq:MF1}
\end{equation}
with $\mathbf{q}\in B=[a_{1},b_{1}]\times[a_{2},b_{2}]\ldots\times[a_{N},b_{N}]\subset\mathbb{R}^{N}$,
$\mathbf{f}(\mathbf{q})=(f_{1}(\mathbf{q}),f_{2}(\mathbf{q}),\ldots,f_{M}(\mathbf{q}))^{T}\in\mathbb{R}^{M}$,
$M\geq N$, and $f_{i}:B\rightarrow\mathbb{R}$ being continuous functions.
The system of equations (\ref{eq:MF1}) can be transformed into a
nonlinear least-squares problem by defining the objective function
\[
F(\mathbf{q})=\frac{1}{2}||\mathbf{f}(\mathbf{q})||^{2}.
\]
Since by construction, $F(\mathbf{q})\geq0$, we infer that a global
minimum $\mathbf{q}^{\star}$ of $F(\mathbf{q})$ satisfies $F(\mathbf{q}^{\star})=0$
and consequently, $\mathbf{f}(\mathbf{q}^{\star})=0$; thus, $\mathbf{q}^{\star}$
is a root of the corresponding system of equations. Finding all (global)
minima $\mathbf{q}^{\star}$ with $F(\mathbf{q}^{\star})=0$ corresponds
to locating all the roots of the system. If some local minima of $F(\mathbf{q})$
will have function value greater than zero, they will be discarded
since they do not correspond to the roots of the system.

The task of locating all local minima of a multidimensional continuous
differentiable function $F:B\subset\mathbb{R}^{N}\rightarrow\mathbb{R}$,
may be defined as follows: Find all $\mathbf{q}_{i}^{\star}\in B\subset\mathbb{R}^{N}$
that satisfy 
\[
\mathbf{q}_{i}^{\star}=\textrm{arg}\min_{\mathbf{q}\in B_{i}}F(\mathbf{q}),\,\,\,B_{i}=B\cap\{\mathbf{q}\mid|\mathbf{q}-\mathbf{q}_{i}^{\star}|<\varepsilon\}.
\]
Among several methods dealing with this problem, stochastic methods
are the most popular due to their effectiveness and implementation
simplicity. The widely used stochastic method is Multistart. In Multistart
a point is sampled uniformly from the feasible region, and subsequently
a local search is started from it. The weakness of this algorithm
is that the same local minima can be repeatedly found, wasting so
computational resources. For this reason, clustering methods that
attempt to avoid this drawback have been developed (Becker and Lago
1970; Törn 1978; Boender et al. 1982; Kan and Timmer 1987a; Kan and
Timmer 1987b). A cluster is defined as a set of points that are assumed
to belong to the region of attraction of the same minimum, and so,
only one local search is required to locate it. The region of attraction
of a local minimum $\mathbf{q}^{\star}$ is defined as 
\[
A(\mathbf{q}^{\star})=\{\mathbf{q}\mid\mathbf{q}\in B\subset\mathbb{R}^{N},\,\,\,\mathcal{L}(\mathbf{q})=\mathbf{q}^{\star}\},
\]
where $\mathcal{L}(\mathbf{q})$ is the point where the local search
procedure $\mathcal{L}$ terminates when started at point $\mathbf{q}$.
Here, $\mathcal{L}$ is a deterministic local optimization method
such as BFGS (Fletcher 1970), Steepest Descent, Modified Newton, etc.
Representative for clustering techniques is the Minfinder method of
Tsoulos and Lagaris (2006). This method, which is illustrated in Algorithm
\ref{alg:AlgMinFinder}, relies on the Topographical Multilevel Single
Linkage of Ali and Storey (1994), and is the heart of our stochastic
optimization method. However, the method has some additional features
that serve our purpose. The following key elements are apparent in
Algorithm \ref{alg:AlgMinFinder}: 
\begin{enumerate}
\item the selection of a starting point for the local search, 
\item the generation of sampling points, 
\item the local optimization method,
\item the specification of the set of distinct solutions, and
\item the stopping rule. 
\end{enumerate}
Their description is laid out in the following subsections.

\begin{algorithm}
\caption{Minfinder method. \label{alg:AlgMinFinder}}

$\bullet$ Initialize the set of distinct solutions $Q=\textrm{Ø}$.

$\bullet$ Generate a set $S=\{\mathbf{s}_{i}\}_{i=1}^{N_{\textrm{s}}}$
of $N_{\textrm{s}}$ sample points in the box $B$.

\textbf{For }all $\mathbf{s}\in S$ \textbf{do}

\hspace{0.5cm}\textbf{If} $\mathbf{s}$ is a start point \textbf{then}

\hspace{1cm}$\bullet$ Start a local search $\mathbf{q}=\mathcal{L}(\mathbf{s})$.

\hspace{1cm}$\bullet$ If $\mathbf{q}\notin Q$, insert $\mathbf{q}$
in the set of distinct solutions $Q$.

\hspace{1cm}\textbf{End if}

\hspace{0.5cm}\textbf{If }a stopping rule applies, \textbf{exit}.

\textbf{End for}
\end{algorithm}

Before proceeding we note that in our case, the dimensions of the
optimization problem are $M=N=2n$, where $n$ is the number of masses.
The vector $\mathbf{q}$ stands for a configuration, that is, $\mathbf{q}$
includes the Cartesian coordinates of the point masses, 
\[
\mathbf{q}=((x_{1},y_{1}),\ldots,(x_{n},y_{n}))^{T}\in\mathbb{R}^{N}\backslash\Delta.
\]
For simplicity we restrict ourselves to the case of equal masses,
i.e., $m_{i}=m$ for all $i=1,\ldots,n$. According to Eq. (\ref{eq:A1})
and for $\mathbf{S}=\text{diag}(\sigma_{x},\sigma_{y})$, the functions
$f_{i}(\mathbf{q})$ that determine the objective function $F(\mathbf{q})$
are 
\begin{align}
f_{2i-1}(\mathbf{q}) & =\sum_{j=1,j\neq i}^{n}m\frac{x_{j}-x_{i}}{R_{ij}^{3}}+U_{n}(\mathbf{q})\sigma_{x}x_{i},\label{eq:Fx}\\
f_{2i}(\mathbf{q}) & =\sum_{j=1,j\neq i}^{n}m\frac{y_{j}-y_{i}}{R_{ij}^{3}}+U_{n}(\mathbf{q})\sigma_{y}y_{i},\label{eq:Fy}
\end{align}
for $i=1,\ldots,n$, where $U_{n}\bigl(\mathbf{q}\bigr)=\sum_{1\leq i<j\leq n}m^{2}/R_{ij}$
with $R_{ij}=||\mathbf{q}_{i}-\mathbf{q}_{j}||$. In view of Eq. (\ref{eq:A2}),
that is,
\begin{equation}
\sum_{i=1}^{n}m\bigl(\sigma_{x}x_{i}^{2}+\sigma_{y}y_{i}^{2}\bigr)=1,\label{eq:Fc}
\end{equation}
 the following simple bounds on the variables 
\begin{align}
 & -\frac{1}{\sqrt{m\sigma_{x}}}\leq x_{i}\leq\frac{1}{\sqrt{m\sigma_{x}}},\label{eq:Bx}\\
 & -\frac{1}{\sqrt{m\sigma_{y}}}\leq y_{i}\leq\frac{1}{\sqrt{m\sigma_{y}}}\label{eq:By}
\end{align}
are imposed. Thus, we are faced with the solution of a nonlinear least-squares
problem with simple bounds on the variables.

\subsection{Selection of a starting point}

In the Minfinder algorithm, a point is considered to be a start point
if it is not too close to some already located minimum or another
sample, whereby the closeness with a local minimum or some other sample
is guided through the so-called typical distance and the gradient
criterion. In our implementation, we disregard the gradient criterion
because we intend to capture as many solutions as possible. In this
regard, according to Tsoulos and Lagaris (2006), a point $\mathbf{s}$
is considered as start point if none of the following conditions holds.
\begin{enumerate}
\item There is an already located minimum $\mathbf{q}\in Q$ that satisfies
the condition 
\begin{align}
\textrm{C}1: & \,\,\,||\mathbf{s}-\mathbf{q}||<d_{\min}(Q),\,\,\,d_{\min}(Q)=\min_{i,j;i\neq j}||\mathbf{q}_{i}-\mathbf{q}_{j}||,\,\,\,\mathbf{q}_{i},\mathbf{q}_{j}\in Q.\label{eq:C1}
\end{align}
\item $\mathbf{s}$ is near to another sample point $\mathbf{s}'\in S$
that satisfies the condition 
\begin{align}
\textrm{C}2: & \,\,\,||\mathbf{s}-\mathbf{s}'||<r_{\textrm{t}}.\label{eq:C3}
\end{align}
\end{enumerate}
In Eq. (\ref{eq:C3}), $r_{\textrm{t}}$ is a typical distance defined
by 
\begin{align}
r_{\textrm{t}} & =\frac{1}{L}R_{\mathrm{t}},\,\,\,R_{\mathrm{t}}=\sum_{i=1}^{L}||\mathbf{s}_{i}-\mathcal{L}(\mathbf{s}_{i})||,\label{eq:Typicaldistance}
\end{align}
where $\mathbf{s}_{i}$ are starting points for the local search procedure
$\mathcal{L}$, and $L$ is the number of performed local searches.
The main idea behind Eq. (\ref{eq:Typicaldistance}) is that after
a number of iterations and a number of local searches, the quantity
$r_{\textrm{t}}$ will be a reasonable approximation for the mean
radius of the regions of attraction.

\subsection{Generation of sampling points}

A powerful sampling method should create data that accurately represents
the underlying function preserving the statistical characteristics
of the complete dataset. The following sampling methods are implemented
in our algorithm. 
\begin{enumerate}
\item Pseudo-Random Number Generators\emph{ }(Marsaglia and Tsang 2000;
Matsumoto and Nishimura 1998). These methods attempt to generate statistically
uniform random numbers within the given range. They are fast and simple
to use, but the samples are not distributed uniformly enough especially
for the cases of a low number of sampling points and/or large number
of dimensions. 
\item Chaotic Methods (Dong et al. 2012; Gao and Wang 2007; Gao and Liu
2012). Theoretically, chaotic motion can traverse every state in a
certain region. As compared to Pseudo-Random Number Generators, chaotic
initialization methods can form better distributions in the search
space due to the randomness and non-repetitivity of chaos. 
\item Low Discrepancy Methods. These methods have the support of theoretical
upper bounds on discrepancy (i.e., non-uniformity) and belong to the
category of deterministic methods (no randomness is involved in their
algorithms). Uniform populations are created using quasi-random or
sub-random sequences that cover the input space quickly and evenly,
while the uniformity and coverage improve continually as more data
points are added to the sequence. Halton, Sobol, Niederreiter, Hammersley,
and Faure are well known sequences from this category. These work
well in low dimensions, but they lose uniformity in high dimensions. 
\item Latin Hypercube (McKay et al. 1979). Latin hypercube sampling partitions
the input space into bins of equal probability and distributes the
samples in such a way that only one sample is located in each axis-aligned
hyperplane. The method is useful when the underlying function has
a low order distribution but produces clustering of sampling points
at high dimensions. 
\item Quasi-Oppositional Differential Evolution (Rahnamayan et al. 2006;
Rahnamayan et al. 2008). The method can be regarded as a two-step
method. The algorithm (i) generates a random original population,
(ii) determines the opposition points of the original population,
(iii) merges both populations into one big population, and finally,
(iv) selects the best individuals according to the lowest values of
the objective function. 
\item Centroidal Voronoi Tessellation (Du et al. 2010). Centroidal Voronoi
Tessellation produces sample points located at the mass center of
each Voronoi cell covering the input space. Actually, the algorithm
starts with an initial partition and then iteratively updates the
estimate of the centroids of the corresponding Voronoi subregions.
The initial population can be generated, for example, by Pseudo-Random
Number Generators or Low Discrepancy methods. One drawback of the
method lies in being computationally demanding for high dimensional
spaces. 
\end{enumerate}

\subsection{Local optimization methods}

There is an impressive number of optimization software packages for
nonlinear least squares and general function minimization (for a survey,
we refer to the monograph by More and Wright (1993)). The following
optimization algorithms are implemented in our computer code. 
\begin{enumerate}
\item The BFGS algorithm of Byrd, Nocedal and Zhu (1995). The algorithm
relies on the gradient projection method to determine a set of active
constraints at each iteration, and uses a line search procedure to
compute the step length, as well as, a limited memory BFGS matrix
to approximate the Hessian of the objective function. 
\item The TOLMIN algorithm of Powell (1989). The algorithm includes (i)
quadratic approximations of the objective function whose second derivative
matrices are updated by means of the BFGS formula, (ii) active sets
technique, and (iii) a primal-dual quadratic programming procedure
for calculation of the search direction. Each search direction is
calculated so that it does not intersect the boundary of any inequality
constraint that is satisfied and that has a ``small'' residual at
the beginning of the line search.
\item The DQED algorithm due to Hanson and Krogh (1992). The algorithm is
based on approximating the nonlinear functions using the quadratic-tensor
model proposed by Schnabel and Frank (1984). The objective function
is allowed to increase at intermediate steps, as long as a predictor
indicates that a new set of best values exists in a trust-region (defined
by a box containing the current values of the unknowns). 
\item The optimization algorithms implemented in the Portable, Outstanding,
Reliable and Tested (PORT) library. The algorithms use a trust-region
method in conjunction with a Gauss-Newton and a Quasi-Newton model
to compute the trial step (Dennis Jr. et al. 1981a; Dennis Jr. et
al. 1981b). When the first trial steps fails, the alternate model
gets a chance to make a trial step with the same trust-region radius.
If the alternate model fails to suggest a more successful step, then
the current model is maintained for the duration of the present iteration
step. The trust-region radius is then decreased until the new iterate
is determined or the algorithm fails. In particular, the routine DRN2GB
for nonlinear least squares and working in reverse communication is
used in our applications. Note that reverse-communication drivers,
return to their caller (e.g., the main program) whenever they need
to know $\mathbf{f}(\mathbf{q})$ and/or $\partial\mathbf{f}(\mathbf{q})/\partial\mathbf{q}$
at a new $\mathbf{q}$. The calling routine must then compute the
necessary information and call the reverse-communication driver again,
passing it the information it wants. 
\end{enumerate}
In some applications, the computation of the Jacobian matrix $\partial\mathbf{f}(\mathbf{q})/\partial\mathbf{q}$
is more time consuming than the computation of $\mathbf{f}(\mathbf{q})$.
To reduce the number of calls to the derivative routine, the above
deterministic optimization algorithms are used in conjunction with
some stochastic solvers, as for example, (i) evolutionary strategy,
(ii) genetic algorithms, and (iii) simulated annealing. Essentially,
the algorithm is organized so that the output delivered by a stochastic
algorithm is the input (initial guess) of a deterministic algorithm.
It should be point out that the user has the option to use the stochastic
algorithms in a stand-alone mode.

\subsection{Set of distinct solutions}

For a central configuration ($\sigma_{x}=\sigma_{y}$), if 
\[
\mathbf{q}=((x_{1},y_{1}),\ldots,(x_{n},y_{n}))^{T}
\]
is a solution, then any (i) permuted solution 
\[
\mathcal{P}\mathbf{q}=((x_{\sigma(1)},y_{\sigma(1)}),\ldots,(x_{\sigma(n)},y_{\sigma(n)}))^{T},
\]
(ii) rotated solution 
\[
\mathcal{R}_{\alpha}\mathbf{q}=((x_{1}^{\prime},y_{1}^{\prime}),\ldots,(x_{n}^{\prime},y_{n}^{\prime}))^{T}\textrm{ with }\left(\begin{array}{c}
x_{i}^{\prime}\\
y_{i}^{\prime}
\end{array}\right)=\mathbf{R}(\alpha)\left(\begin{array}{c}
x_{i}\\
y_{i}
\end{array}\right),
\]
where $\mathbf{R}(\alpha)$ is a rotation matrix of angle $\alpha$,
(iii) conjugated solution $\mathcal{C}\mathbf{q}$, where $\mathcal{C}\mathbf{q}$
stands for the reflected solutions with respect to the $x$- and $y$-axis,
i.e., 
\[
\mathcal{C}_{x}\mathbf{q}=((x_{1},-y_{1}),\ldots,(x_{n},-y_{n}))^{T},
\]
and 
\[
\mathcal{C}_{y}\mathbf{q}=((-x_{1},y_{1}),\ldots,(-x_{n},y_{n}))^{T}
\]
respectively, are also solutions. For a balanced configuration, if
$\mathbf{q}$ is a solution, then (i) any permuted solution, (ii)
a solution rotated by $\alpha=\pi$, and (iii) any conjugated solution
are also solutions. Taking these results into account, we define the
set of distinct solutions $Q$ as follows. First, we consider the
set of all solutions 
\[
Q_{0}=\{\mathbf{q}\in\mathbb{R}^{N}\backslash\Delta\mid\mathbf{q}\text{ solves the optimization problem}\}.
\]
Then, for central configurations, we introduce an equivalence relation
according to which two elements $\mathbf{q}_{1},\mathbf{q}_{2}\in Q_{0}$
are called equivalent (in notation $\mathbf{q}_{1}\sim\mathbf{q}_{2}$)
if and only if one of the following conditions are satisfied: (i)
there exists a permutation $\mathcal{P}$ such that $\mathbf{q}_{1}=\mathcal{P}\mathbf{q}_{2}$,
(ii) there exists an angle $\alpha$ such that $\mathbf{q}_{1}=\mathcal{R}_{\alpha}\mathbf{q}_{2}$,
(iii) $\mathbf{q}_{1}=\mathcal{C}_{x}\mathbf{q}_{2}$, and (iv) $\mathbf{q}_{1}=\mathcal{C}_{y}\mathbf{q}_{2}$.
For balanced configurations, the equivalence relation is: $\mathbf{q}_{1}\sim\mathbf{q}_{2}$,
where $\mathbf{q}_{1},\mathbf{q}_{2}\in Q_{0}$, if and only if one
of the following conditions are satisfied: (i) there exists a permutation
$\mathcal{P}$ such that $\mathbf{q}_{1}=\mathcal{P}\mathbf{q}_{2}$,
(ii) $\mathbf{q}_{1}=\mathcal{R}_{\pi}\mathbf{q}_{2}$, (iii) $\mathbf{q}_{1}=\mathcal{C}_{x}\mathbf{q}_{2}$,
and (iv) $\mathbf{q}_{1}=\mathcal{C}_{y}\mathbf{q}_{2}$. In both
cases the set of distinct solutions $Q$ is $Q=Q_{0}/\sim$. 

An important step of the algorithm is to test if a solution $\mathbf{q}$,
computed by means of a local optimization method, i.e., $\mathbf{q}\in Q_{0}$,
belongs to the set of (distinct) solutions $Q=\{\mathbf{q}_{i}\}_{i=1}^{N_{\textrm{sol}}}$;
otherwise, $\mathbf{q}$ will be included in $Q$. For doing this,
we use the following procedure: if (i) the objective function at $\mathbf{q}$
is smaller than a prescribed tolerance and (ii) the ordered set of
mutual distances $\{R_{ij}\}$ corresponding to $\mathbf{q}$ does
not coincides with the ordered set of mutual distances $\{R_{ij}^{\prime}\}$
corresponding to any $\mathbf{q}'\in Q$, then $\mathbf{q}$ is inserted
in the set of solutions $Q$. 

\subsection{Stopping rules\label{subsec:Stopping-rules}}

A reliable stopping rule should terminate the iterative process when
all minima have been collected with certainty. Several Bayesian stopping
rules make use (i) of estimates of the fraction of the uncovered space
(Zieli\'{n}ski 1981) and the number of local minima (Boender and Kan
1987), or (ii) on the probability that all local minima have been
observed (Boender and Romeijn 1995). For functions with many local
minima, these stopping rules are not very efficient, because for example,
in some cases, the number of local searches must be greater than the
square of the located minima. More effective termination criteria
based on asymptotic considerations have been designed by Lagaris and
Tsoulos (2008). These include (i) the Double-Box stopping rule, which
uses a Monte Carlo based model that enables the determination of the
coverage of the bounded search domain, (ii) the Observables stopping
rule, which relies on a comparison between the expectation values
of observable quantities to the actually measured ones, and (iii)
the Expected Minimizers stopping rule, which is based on estimating
the expected number of local minima in the specified domain.

The Double-Box stopping rule can be summarized as follows. Choose
the integers $K$ and $N_{\textrm{s}}^{0}$. Let $B_{2}$ be a larger
box that contains $B$ such that $\mu(B_{2})=2\mu(B)$, where $\mu(B)$
is the measure of $B$. At every iteration $k$, where $1\leq k\leq K$,
sample $B_{2}$ uniformly until $N_{\textrm{s}}^{0}$ points fall
in $B$. After $k$ iterations, let $M_{k}$ be the accumulated (total)
number of points from $B_{2}$. Then, the quantity $\delta_{k}=kN_{\textrm{s}}^{0}/M_{k}$
has an expectation value $\bigl\langle\delta\bigr\rangle_{(k)}=(1/k)\sum_{i=1}^{k}\delta_{i}$
that tends to $\mu(B)/\mu(B_{2})=1/2$ as $k\rightarrow\infty$, while
the variance $\sigma_{(k)}^{2}(\delta)=(1/k)\sum_{i=1}^{k}(\delta_{i}-\bigl\langle\delta\bigr\rangle_{(k)})^{2}$
tends to zero as $k\rightarrow\infty$. The variance is a smoother
quantity than the expectation, and is better suited for a termination
criterion. Actually, the iterative process is stopped when the variance
$\sigma_{(k)}^{2}(\delta)$ is below a prescribed tolerance. 

From the above discussion it is apparent that the Double-Box stopping
rule requires a specific sampling. In order to implement all sampling
methods into a common framework, we generate a set $S=\{\mathbf{s}_{i}\}_{i=1}^{N_{\textrm{s}}}$
of $N_{\textrm{s}}=KN_{\textrm{s}}^{0}$ sample points in the box
$B$, where $K$ is the number of disjoint subsets in which the set
$S$ is split, i.e., $S=\cup_{k=1}^{K}S_{k}$, and $N_{\textrm{s}}^{0}$
the number of sample points in each subset $S_{k}$. Because in some
applications, the requirement of finding as many solutions as possible
asks for a very small tolerance of the Double-Box stopping rule (and
so, for a large number of iterations) we use an alternative termination
criterion: if the number of solutions $N_{\textrm{sol}}(k)$ does
not change within $k^{\star}\leq K$ iteration steps, i.e., $N_{\textrm{sol}}(k)=N_{\textrm{sol}}(l)$
for all $l=k-k^{\star}+1,\ldots,k-1$, the algorithm stops. Algorithm
\ref{alg:AlgStochastic} illustrates the main steps of the stochastic
optimization method used in this work.

\begin{algorithm}
\caption{The main steps of the stochastic optimization method.\label{alg:AlgStochastic}}

$\bullet$ Choose $K$ and $N_{\textrm{s}}^{0}$. Generate a set $S=\{\mathbf{s}_{i}\}_{i=1}^{N_{\textrm{s}}}$
of $N_{\textrm{s}}=KN_{\textrm{s}}^{0}$ sample

points in the box $B$. For the Double-Box stooping rule, compute

$\delta_{k}$ for all $k=1,\ldots,K$.

$\bullet$ Initialize the set of distinct solutions $Q=\textrm{Ø}$,
the number of local 

searches $L=0$, and the sum determining the typical distance 

$R_{\textrm{t}}=0$.

\textbf{For} $k=1,\ldots,K$ \textbf{do}

\hspace{0.5cm}$\bullet$ Initialize the number of distinct solutions
at iteration step $k$,

\hspace{0.5cm}$N_{\mathrm{sol}}(k)=|Q|$.

\hspace{0.5cm}$\bullet$ Compute $\bigl\langle\delta\bigr\rangle_{(k)}$
and $\sigma_{(k)}^{2}(\delta)$.

\hspace{0.5cm}$\bullet$ Initialize the set of all solutions at iteration
step $k$, $Q_{0}=\textrm{Ø}$.

\hspace{0.5cm}\textbf{For} $i=1,\ldots,N_{\textrm{s}}^{0}$ \textbf{do}

\hspace{1cm}$\mathbf{s}=\mathbf{s}_{i+(k-1)N_{\textrm{s}}^{0}}$.

\hspace{1cm}$\bullet$ Test if $\mathbf{s}$ is a start point using
as control parameters

\hspace{1cm}$L$, $r_{\textrm{t}}=R_{\mathrm{t}}/L$, and $d_{\min}(Q_{0})$.

\hspace{1cm}\textbf{If} $\mathbf{s}$ is a start point \textbf{then}

\hspace{1.5cm}$\bullet$ Start a local search $\mathbf{q}=\mathcal{L}(\mathbf{s})$.

\hspace{1.5cm}$\bullet$ Insert $\mathbf{q}$ in the set $Q_{0}$. 

\hspace{1.5cm}$\bullet$ Update $L\leftarrow L+1$ and $R_{\textrm{t}}\leftarrow R_{\textrm{t}}+||\mathbf{s}-\mathbf{q}||$,
and

\hspace{1.5cm}compute $d_{\min}(Q_{0})=\min_{i\neq j}||\mathbf{q}_{i}-\mathbf{q}_{j}||$
for all $\mathbf{q}_{i},\mathbf{q}_{j}\in Q_{0}$. 

\hspace{1cm}\textbf{End if}

\hspace{0.5cm}\textbf{End for}

\hspace{0.5cm}\textbf{For} each $\mathbf{q}\in Q_{0}$ \textbf{do}

\hspace{1cm}\textbf{If} $\mathbf{q}\notin Q$, insert $\mathbf{q}$
in the set of distinct solutions $Q$ and 

\hspace{1cm}update $N_{\mathrm{sol}}(k)\leftarrow N_{\mathrm{sol}}(k)+1$.

\hspace{0.5cm}\textbf{End for}

\hspace{0.5cm}\textbf{If }(Double-Box stooping rule) \textbf{then}

\hspace{1cm}\textbf{If $\sigma_{(k)}^{2}(\delta)<\varepsilon_{\mathrm{DB}}$,
exit}

\hspace{0.5cm}\textbf{else}

\hspace{1cm}\textbf{If $N_{\mathrm{sol}}(k)=N_{\mathrm{sol}}(l)$}
for all $l=k-k^{\star}+1,\ldots,k-1$, \textbf{exit}

\hspace{0.5cm}\textbf{End if}

\textbf{End for}
\end{algorithm}

\section{Testing solutions}

In the post-processing step it is straightforward to check
\begin{enumerate}
\item if for any configuration $\mathbf{q}=(\mathbf{q}_{1}^{T},...,\mathbf{q}_{n}^{T})^{T}$,
the center of mass of the configuration is located at the vertex $(0,0)$,
that is, if the condition $\sum_{i=1}^{n}m_{i}\mathbf{q}_{i}=\boldsymbol{0}$
is satisfied,
\item if for any configuration $\mathbf{q}=(\mathbf{q}_{1}^{T},...,\mathbf{q}_{n}^{T})^{T}$,
the $\mathbf{S}$-weighted moment of inertia is normalized to one,
that is, if the condition $\sum_{j=1}^{n}m_{j}\mathbf{q}_{j}^{T}\mathbf{S}\mathbf{q}_{j}=1$
is satisfied, and
\item if for any central configuration $\mathbf{q}=(\mathbf{q}_{1}^{T},...,\mathbf{q}_{n}^{T})^{T}$,
the residual of the Albouy-Chenciner equations defined as 
\[
\Delta=\sqrt{\sum_{1\leq i<j\leq n}f_{ij}^{2}(\mathbf{R})},
\]
where (cf. Eq. (\ref{eq:ALBOUYEq})) 
\[
f_{ij}(\mathbf{R})=m\sum_{k=1}^{n}[S_{ik}(R_{jk}^{2}-R_{ik}^{2}-R_{ij}^{2})+S_{jk}(R_{ik}^{2}-R_{jk}^{2}-R_{ij}^{2})],\,\,\,1\leq i<j\leq n
\]
and $R_{ij}=||\mathbf{q}_{i}-\mathbf{q}_{j}||$, is sufficiently small. 
\end{enumerate}
Two additional tests related to the fulfillment of the Morse equality
and the uniqueness of the solutions are listed below. 

\subsection{Morse equality}

A drawback of the algorithm is that there is no guarantee that in
a finite number of steps, all solutions are found. Some hope that
the set $Q$, at least for small $n$, is complete, comes from the
Morse equality: if $\widehat{U}_{n}:\mathcal{N}(S)/\text{SO}(2)\rightarrow\mathbb{R}$
is a Morse function, we have 
\begin{equation}
\sum_{k}(-1)^{k}\nu_{k}=\chi(\mathcal{N}(S)/\text{SO}(2))=(-1)^{n}(n-2)!,\label{eq:Morse}
\end{equation}
where $\nu_{k}$ are the number of critical points of index $k$ and
$\chi(\mathcal{N}(S)/\text{SO}(2))$ is the Euler characteristic of
$\mathcal{N}(S)/\text{SO}(2)$ (Ferrario 2002). Each central configuration
$\mathbf{q}=(\mathbf{q}_{1}^{T},...,\mathbf{q}_{n}^{T})^{T}$ gives
rise to central configurations $(\mathbf{q}_{\sigma(1)}^{T},...,\mathbf{q}_{\sigma(n)}^{T})^{T}$
for all $\sigma\in S_{n}$, where $S_{n}$ is the group of permutations
of $n$ elements. When the configuration has an axis of symmetry,
there are $n!/j(\mathbf{q})$ many distinct critical points, where
$j(\mathbf{q})$ is the size of the isotropy group of the central
configuration $\mathbf{q}$. Otherwise, when the configuration has
no axis of symmetry, there are $n!/j(\mathbf{q})$ many distinct critical
points, as well as their reflections with respect to an axis in the
plane; hence, there are $2n!/j(\mathbf{q})$ distinct critical points.
Let $i(\mathbf{q})=j(\mathbf{q})$ when $\mathbf{q}$ has an axis
of symmetry and $i(\mathbf{q})=j(\mathbf{q})/2$ when $\mathbf{q}$
has no axis of symmetry. Furthermore, let $h(\mathbf{q})$ be the
Morse index of $\mathbf{q}$. Then, for central configurations, the
Morse equality (\ref{eq:Morse}) becomes
\begin{equation}
\sum_{i=1}^{N_{\textrm{sol}}}\frac{(-1)^{h(\mathbf{q}_{i})}}{i(\mathbf{q}_{i})}=\frac{(-1)^{n}}{n(n-1)}.\label{eq:Morse1}
\end{equation}
The quantities in Eq. (\ref{eq:Morse1}) are computed as follows. 
\begin{enumerate}
\item For a non-degenerate solution $\mathbf{q}$, the Morse index is given
by the number of negative eigenvalues of the Hessian matrix (cf. Eq.
(\ref{eq:A3})) 
\begin{equation}
\mathbf{H}(\mathbf{q})=D^{2}U_{n}(\mathbf{q})+U_{n}(\mathbf{q})\mathbf{S}\mathbf{M},\label{eq:Hessmatrix}
\end{equation}
where $D^{2}U_{n}(\mathbf{q})$ is computed as 
\begin{align*}
D^{2}U_{n}(\mathbf{q}) & =(\mathbf{D}_{ij})_{i,j=1}^{n}\in\mathbb{R}^{2n\times2n},\\
\mathbf{D}_{ij} & =\frac{m^{2}}{R_{ij}^{3}}(\mathbf{I}_{2\times2}-3\mathbf{u}_{ij}\mathbf{u}_{ij}^{T})\in\mathbb{R}^{2\times2},\,\,\,i\neq j,\\
\mathbf{D}_{ii} & =-\sum_{j=1}^{n}\mathbf{D}_{ij},\,\,\,i=j,\\
\mathbf{u}_{ij} & =(\mathbf{q}_{i}-\mathbf{q}_{j})/R_{ij}.
\end{align*}
\item The isotropy index is computed as in Ferrario (2002): we count all
configurations that are $\text{O}(2)$ invariant, that is, for a configuration
$\mathbf{q},$ we compute the isotropy index $i(\mathbf{q})$. Specifically,
for any collinear configuration, we take $i(\mathbf{q})=2$, while
for any non-collinear configuration, we take into account that this
type of configuration can have (i) as isotropy subgroup, a dihedral
group or the cyclic group of order 2, in which case, $i(\mathbf{q})$
is the number of reflection lines of polar angle $\alpha_{0}$ (with
$0\leq\alpha_{0}<180\text{°}$), or (ii) no reflection axis, in which
case, $i(\mathbf{q})=1/2$, i.e., the configuration contributes to
the sum in Eq. (\ref{eq:Morse1}) twice. A reflection line can be
(i) a ray passing through the vertex at $(0,0)$ and a point mass,
or (ii) the bisector of the angle with the vertex at $(0,0)$ and
the rays passing through two neighboring points. 
\end{enumerate}

\subsection{Uniqueness of the solutions}

To test if in a small neighborhood of each numerical solution there
is a unique exact solution, we use an approach based on the Krawczyk
operator method (Lee and Santoprete 2009; Moczurad and Zgliczynski
2019, 2020). This approach, that works hand in hand with the optimization
method used, is summarized below.

Consider an overdetermined system of nonlinear equations $\mathbf{f}(\mathbf{q})=\boldsymbol{0}$
with $\mathbf{q}\in\mathbb{R}^{N}$, $\mathbf{f}(\mathbf{q})=(f_{1}(\mathbf{q}),f_{2}(\mathbf{q}),\ldots,f_{M}(\mathbf{q}))^{T}\in\mathbb{R}^{M}$,
$M\geq N$, and the objective function $F(\mathbf{q})=(1/2)||\mathbf{f}(\mathbf{q})||^{2}$.
In the Gauss-Newton method for solving the least-squares problem $\min_{\mathbf{q}}F(\mathbf{q})$,
the search direction $\mathbf{p}_{k}=\mathbf{q}_{k+1}-\mathbf{q}_{k}$
satisfies the equation $\mathbf{J}_{f}^{T}(\mathbf{q}_{k})\mathbf{J}_{f}(\mathbf{q}_{k})\mathbf{p}=-\mathbf{J}_{f}^{T}(\mathbf{q}_{k})\mathbf{f}(\mathbf{q}_{k})$,
where $\mathbf{J}_{f}(\mathbf{q})=D\mathbf{f}(\mathbf{q})\in\mathbb{R}^{M\times N}$
is the Jacobian matrix of $\mathbf{f}$. In other words, the new iterate
is computed as
\begin{equation}
\mathbf{q}_{k+1}=\mathbf{q}_{k}-[\mathbf{J}_{f}^{T}(\mathbf{q}_{k})\mathbf{J}_{f}(\mathbf{q}_{k})]^{-1}\mathbf{J}_{f}^{T}(\mathbf{q}_{k})\mathbf{f}(\mathbf{q}_{k}).\label{eq:K1}
\end{equation}
An equivalent interpretation of the iteration formula (\ref{eq:K1})
can be given by taking into account that the gradient and the Gauss-Newton
approximation to the Hessian of $F(\mathbf{q})$ are given by $\mathbf{g}(\mathbf{q})=DF(\mathbf{q})=\mathbf{J}_{f}^{T}(\mathbf{q})\mathbf{f}(\mathbf{q})$
and $\mathbf{J}_{g}(\mathbf{q})=D\mathbf{g}(\mathbf{q})=D^{2}F(\mathbf{q})\approx\mathbf{J}_{f}^{T}(\mathbf{q})\mathbf{J}_{f}(\mathbf{q})$,
respectively. The first-order necessary condition for optimality requires
that $\mathbf{g}(\mathbf{q})=0$. If this equation is solved by means
of the Newton method we are led to the iteration formula 
\begin{equation}
\mathbf{q}_{k+1}=\mathbf{q}_{k}-[\mathbf{J}_{g}(\mathbf{q}_{k})]^{-1}\mathbf{g}(\mathbf{q}_{k}),\label{eq:K2}
\end{equation}
which coincides with that given by Eq. (\ref{eq:K1}). In order to
test the uniqueness, we use the interval arithmetic library INTLIB
(Kearfott et al. 1994), and let $[\mathbf{q}]_{r}\subset\mathbb{R}^{N}$
be an interval set centered at a numerical solution $\mathbf{q}$
with radius $r$ sufficiently small (e.g., $r=10^{-8}$). The procedure
involves two steps.
\begin{description}
\item [{Step}] \textbf{1}. Check if global minima of $F$ exist in $[\mathbf{q}]_{r}$,
that is, if $\mathbf{0}\in\mathbf{f}([\mathbf{q}]_{r})$.
\item [{Step}] \textbf{2}. Check if there exists a unique stationary point
of $F$ in $[\mathbf{q}]_{r}$, that is, if there exists a unique
zero of the first-order optimality equation $\mathbf{g}(\mathbf{q})=0$.
For this purpose, we define the Krawczyk operator by 
\[
K(\mathbf{q},[\mathbf{q}]_{r},\mathbf{g})=\mathbf{q}-\mathbf{C}\mathbf{g}(\mathbf{q})+[\mathbf{I}_{N\times N}-\mathbf{C}\mathbf{J}_{g}([\mathbf{q}]_{r})]([\mathbf{q}]_{r}-\mathbf{q}),
\]
where $\mathbf{C}\in\mathbb{R}^{N\times N}$ is a preconditioning
matrix which is sufficiently close to $[\mathbf{J}_{g}(\mathbf{q})]^{-1}$,
and $\mathbf{I}_{N\times N}$ is the identity matrix. A property of
the Krawczyk operator, which provide a method for proving the existence
of a unique zero in a given interval set, states that if $K(\mathbf{q},[\mathbf{q}]_{r},\mathbf{g})\subset\textrm{int}[\mathbf{q}]_{r}$
then there exists a unique zero of $\mathbf{g}$ in $[\mathbf{q}]_{r}$.
Following the recommendations of Kearfott (1996), the computational
process is organized as follows: 
\begin{enumerate}
\item compute the matrices $m(\mathbf{J}_{g}([\mathbf{q}]_{r})$ and $\Delta\mathbf{J}_{g}([\mathbf{q}]_{r})$
according to the decomposition $\mathbf{J}_{g}([\mathbf{q}]_{r})=m(\mathbf{J}_{g}([\mathbf{q}]_{r})+\Delta\mathbf{J}_{g}([\mathbf{q}]_{r})$,
where $m(\mathbf{J}_{g}([\mathbf{q}]_{r})$ is the center matrix of
the interval matrix $\mathbf{J}_{g}([\mathbf{q}]_{r})$ (the center
matrix of an interval matrix is defined componentwise according to
the rule $m([\underline{x},\overline{x}])=(\underline{x}+\overline{x})/2$); 
\item choose the preconditioning matrix $\mathbf{C}$ as the inverse of
$m(\mathbf{J}_{g}([\mathbf{q}]_{r})$, that is, $\mathbf{C}=[m(\mathbf{J}_{g}([\mathbf{q}]_{r})]^{-1}$;
\item to account for rounding errors in the calculation of $\mathbf{g}(\mathbf{q})$,
compute instead $\mathbf{g}([\mathbf{q}]_{r})$ by using interval
arithmetic; 
\item calculate $K(\mathbf{q},[\mathbf{q}]_{r},\mathbf{g})$ by means of
the relation
\[
K(\mathbf{q},[\mathbf{q}]_{r},\mathbf{g})=\mathbf{q}-\mathbf{C}\mathbf{g}([\mathbf{q}]_{r})-\mathbf{C}\,\Delta\mathbf{J}_{g}([\mathbf{q}]_{r})([\mathbf{q}]_{r}-\mathbf{q}).
\]
 
\end{enumerate}
\end{description}
Because any global minimum satisfies the first-order optimality condition,
we infer that if both conditions $\mathbf{0}\in\mathbf{f}([\mathbf{q}]_{r})$
and $K(\mathbf{q},[\mathbf{q}]_{r},\mathbf{g})\subset\textrm{int}[\mathbf{q}]_{r}$
are satisfied, then there exists a unique global minimum of $F$ in
$[\mathbf{q}]_{r}$, and so, a unique solution of the system of nonlinear
equations $\mathbf{f}(\mathbf{q})=\boldsymbol{0}$ in $[\mathbf{q}]_{r}$. 

For balanced configurations, the test is used with $M=N=2n$ and $\mathbf{f}(\mathbf{q})=(f_{1}(\mathbf{q}),f_{2}(\mathbf{q}),\ldots,f_{N}(\mathbf{q}))^{T},$
where $f_{2i-1}(\mathbf{q})$ and $f_{2i}(\mathbf{q})$, $i=1,\ldots,n$
are given by Eqs. (\ref{eq:Fx}) and (\ref{eq:Fy}), respectively.
For central configurations, we have to consider a system of nonlinear
equations, in which continuous rotations are eliminated. This is accomplished,
by rotating the configuration $\mathbf{q}$ such that the point mass
with the maximum radial distance is on the $x$-axis. Let $i_{0}$
be the index of this point mass and $\mathbf{q}^{\prime}$ the rotated
configuration. In addition to the functions $f_{2i-1}(\mathbf{q})$
and $f_{2i}(\mathbf{q})$, $i=1,\ldots,n$, we include the constraint
\begin{equation}
f_{2n+1}(\mathbf{q})=y_{i_{0}}\label{eq:AddConstraint}
\end{equation}
in the objective function $F(\mathbf{q})$, and apply the above test
in the case $M=2n+1>N=2n$ for the interval set $[\mathbf{q}^{\prime}]_{r}\subset\mathbb{R}^{N}$
centered at the rotated solution $\mathbf{q}^{\prime}$. 

Another option for checking the uniqueness is to perform a random
test. If the solution $\mathbf{q}\in Q$ is a unique global minimum
of $F$, that is, if (i) the gradient of $F$ vanishes at $\mathbf{q}$,
and (ii) the Hessian of $F$ is positive definite at $\mathbf{q}$,
then the quadratic approximation 
\begin{equation}
F(\mathbf{p}_{i})\approx\frac{1}{2}(\mathbf{p}_{i}-\mathbf{q})^{T}D^{2}F(\mathbf{q})(\mathbf{p}_{i}-\mathbf{q})\label{eq:Quad1}
\end{equation}
should be valid at a set of sample points $\{\mathbf{p}_{i}\}_{i=1}^{N_{\mathrm{q}}}\subset[\mathbf{q}]_{r}$
with $r$ sufficiently small (e.g., $r=10^{-3}||\mathbf{q}||$). As
before, we use the Gauss-Newton approximation to the Hessian matrix,
i.e., $D^{2}F(\mathbf{q})\approx\mathbf{J}_{f}^{T}(\mathbf{q})\mathbf{J}_{f}(\mathbf{q})$,
and generate $N_{\mathrm{q}}=10^{4}$ sample points $\mathbf{p}_{i}$
by using a pseudo-random number generator. For central configurations,
we either eliminate a sample $\mathbf{p}$ if it is a rotated version
of $\mathbf{q}$, or as before, include the constraint $f_{2n+1}(\mathbf{q})=y_{i_{0}}$
in the objective function $F(\mathbf{q})$. In practice, we test if
the RMS of the relative quadratic approximation errors
\begin{equation}
\textrm{RMS}(\mathbf{q})=\sqrt{\frac{\sum_{i=1}^{N_{\mathrm{q}}}\varepsilon(\mathbf{q},\mathbf{p}_{i})^{2}}{N_{\mathrm{q}}},}\label{eq:RMS}
\end{equation}
where (cf. Eq. (\ref{eq:Quad1})) 
\[
\varepsilon(\mathbf{q},\mathbf{p}_{i})=\frac{F(\mathbf{p}_{i})-\frac{1}{2}||\mathbf{J}_{f}(\mathbf{q})(\mathbf{p}_{i}-\mathbf{q})||^{2}}{F(\mathbf{p}_{i})}
\]
for $i=1,\ldots,N_{\mathrm{q}}$, is below a prescribed tolerance. 

\section{Numerical results}

The performances of the algorithm, and in particular, the number of
numerical solutions found, depend on the selection of a set of control
parameters. These are chosen as follows. 
\begin{enumerate}
\item The configuration $\mathbf{q}$ is considered to be an approximate
solution to the optimization problem, if $F(\mathbf{q})<10^{-20}$. 
\item The set of distinct solutions $Q$ contains only non-degenerate solutions.
If $\lambda_{i}$ are the eigenvalues of the Hessian matrix (\ref{eq:Hessmatrix})
sorted in ascending order, i.e., $|\lambda_{1}|\leq|\lambda_{2}|\leq\ldots\leq|\lambda_{2n}|$,
the central configuration $\mathbf{q}$ is assumed to be an approximate
degenerate solution if $|\lambda_{2}(\mathbf{q})|<10^{-15}$, while
for balanced configurations, the criterion is $|\lambda_{1}(\mathbf{q})|<10^{-15}$.
In this context, the Morse index of a central configuration $\mathbf{q}$
is given by the number of negative eigenvalues $\lambda_{i}$ of the
Hessian matrix for all $i\geq2$.
\item The ordered sets of mutual distances $R_{ij}=||\mathbf{q}_{i}-\mathbf{q}_{j}||$
and $R_{ij}^{\prime}=||\mathbf{q}_{i}^{\prime}-\mathbf{q}_{j}^{\prime}||$
corresponding to the configurations $\mathbf{q}=(\mathbf{q}_{1}^{T},...,\mathbf{q}_{n}^{T})^{T}$
and $\mathbf{q}^{\prime}=(\mathbf{q}_{1}^{\prime T},...,\mathbf{q}_{n}^{\prime T})^{T}$,
respectively, are considered to be approximately identical if $|R_{ij}-R_{ij}^{\prime}|\leq10^{-6}[1+\max(|R_{ij}|,|R_{ij}^{\prime})]$
for all $1\leq i<j\leq n$.
\end{enumerate}
The results of our numerical analysis are available at the website:
\emph{https://github.com}\\
\emph{/AlexandruDoicu/Balanced-and-Central-Configurations}. The output
file for each balanced configuration contains: (i) the value of the
objective function, (ii) the Cartesian coordinates of the point masses,
(iii) the residual of the normalization condition for the moment of
inertia, (iv) the Cartesian coordinates of the center of mass, (v)
the RMS of the relative quadratic approximation error and the maximum
quadratic approximation error, (vi) a logical flag indicating if global
minima of $F$ exist in a small box around the solution, as well as,
a logical flag indicating if there is a unique stationary point of
$F$ in the same box, and (vii) the number of degenerate solutions
and the corresponding Cartesian coordinates of the point masses. For
central configurations, the output file contains in addition (i) the
residual of the Albouy-Chenciner equations, (ii) the Morse and isotropy
indices, and (iii) the residual of the Morse equation. 

For $n\leq12$, the existence and local uniqueness of any central
configuration were independently tested by Moczurad and Zgliczynski
using their code based on the Krawczyk operator method (Moczurad and
Zgliczynski 2019).

\subsection{Central configurations}

The number of central configurations for $m=1$ and $\sigma_{x}=\sigma_{y}=1.0$
are listed in Table \ref{tab:NumberCC}. The results correspond to
a number of masses $n$ ranging from 3 to 12. The following comments
can be made here. 
\begin{enumerate}
\item In all cases, the Morse equality (\ref{eq:Morse1}) is satisfied,
the conditions $\mathbf{0}\in\mathbf{f}([\mathbf{q}]_{r})$ and $K(\mathbf{q},[\mathbf{q}]_{r},\mathbf{g})\subset\textrm{int}[\mathbf{q}]_{r}$
are fulfilled, and the RMS of the relative quadratic approximation
errors (\ref{eq:RMS}) is smaller than $2\times10^{-4}$.
\item For $n\leq10$, all tested sampling methods (Double-Box, Chaotic,
Faure, Sobol, Latin Hypercube, and Quasi-Oppositional Differential
Evolution) yield the same results. For $n=11$, only a Chaotic Method
leads to a set of central configurations for which the Morse equality
is satisfied, while for $n=12$ both a Chaotic Method and a Faure
sequence fulfill this desire. The case $n=11$, requiring large values
for $N_{\textrm{s}}^{0}$, $K$, and $k^{\star}$, is the most challenging. 
\item A Chaotic Method followed by a Faure sequence are the most efficient.
The reason is that the sets of starting points are much smaller than
the sets corresponding to other sampling methods. 
\item All central configurations corresponding to $n\leq9$ are identical
to those presented in Ferrario (2002). In this work, only 64 central
configurations that do not satisfy the Morse equality have been found
in the case $n=10$. The remaining configurations are illustrated
in Fig. \ref{fig:FerrarioFig}.
\item As compared to other solvers described in the literature (Lee and
Santoprete 2009; Moczurad and Zgliczynski 2019), the stochastic optimization
algorithm is extremely efficient. However, it should be pointed out
that the methods used in Moczurad and Zgliczynski (2019), and Lee
and Santoprete (2009) are rigorous, in the sense that the complete
list of central configurations is provided; in our approach, there
is no guarantee that the list is complete. 
\end{enumerate}
\begin{table}
\caption{The number of central configurations $N_{\textrm{sol}}$. Here, $n$
is the number of masses, $K$ the number of disjoint sample subsets,
$N_{\textrm{s}}^{0}$ the number of sample points in each subset,
$k^{\star}$ the number of sample subsets within $N_{\textrm{sol}}$
does not change, and $k_{0}$ the subset when $N_{\textrm{sol}}$
appears for the first time. The sampling methods are a Faure sequence
in the cases $n\protect\neq11$ and a Chaotic Method in the case $n=11$.
\label{tab:NumberCC}}

\medskip{}

\centering{}%
\begin{tabular}{ccccccc}
\hline 
$n$  & $N_{\textrm{sol}}$  & $N_{\textrm{s}}^{0}$  & $K$  & $k^{\star}$  & $k_{0}$  & $\begin{array}{c}
\textrm{Computational Time}\\
\textrm{hours:min:sec}
\end{array}$\tabularnewline
\hline 
3  & 2  & 1000  & 1000  & 100  & 1  & 0:00:01\tabularnewline
4  & 4  & 1000  & 1000  & 100  & 1  & 0:00:04\tabularnewline
5  & 5  & 1000  & 1000  & 100  & 1  & 0:00:10\tabularnewline
6  & 9  & 1000  & 1000  & 100  & 1  & 0:00:26\tabularnewline
7  & 14  & 1000  & 1000  & 100  & 2  & 0:01:05\tabularnewline
8  & 20  & 1000  & 1000  & 100  & 62  & 0:01:32\tabularnewline
9  & 42  & 1000  & 1000  & 100  & 34  & 0:02:04\tabularnewline
10  & 67  & 1000  & 1000  & 100  & 112  & 0:12:55\tabularnewline
11  & 114  & 2000  & 2000  & 500  & 534  & 0:32:24\tabularnewline
12  & 191  & 1000  & 1000  & 200  & 279  & 1:07:40\tabularnewline
\hline 
\end{tabular}
\end{table}

\begin{figure}
\includegraphics[scale=0.5]{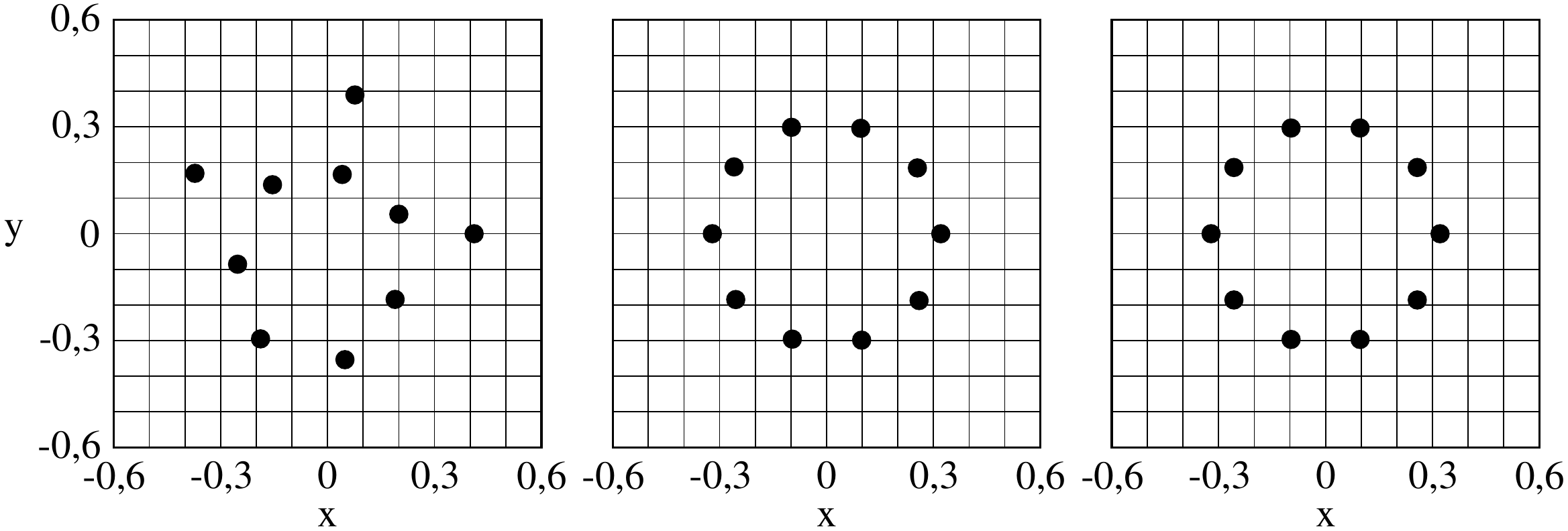}

\caption{The three central configurations that complete the list of Ferrario
(2002). Note that Configurations 2 and 3 do not coincide with the
regular decagon. Moreover, they are very similar but not identical;
Configuration 2 has (only) the axes of symmetry $\alpha_{1}=72\text{°}$
and $\alpha_{2}=162\text{°},$ while Configuration 3 has the axes
of symmetry $\alpha=0\text{°}$ and $\alpha_{2}=90\text{°}$. \label{fig:FerrarioFig}}
\end{figure}

\subsection{Balanced configurations\label{subsec:Balanced-configurations}}

The number of balanced configurations for five identical masses $m=1$
are listed in Table \ref{tab:NumberBC}, while the corresponding configurations
are illustrated in Figs. \ref{fig:BalanceConfigurations01}--\ref{fig:BC08}.
The sampling method was a Faure sequence. In these simulations, we
took $\sigma_{x}=1.0$ and changed $\sigma_{y}$ from $0.1$ to $0.8$
in steps of $0.1$. Thus, the ratio $\sigma_{y}/\sigma_{x}$ varies
from $0.1$ to $0.8$. For a comparison, the central configuration
($\sigma_{x}=\sigma_{y}=1.0)$ for five masses is shown in Fig. \ref{fig:Central-configurations}.
From our numerical analysis, the following conclusions can be drawn: 
\begin{enumerate}
\item In all cases, the collinear configurations along the $x$- and $y$-axis
are present. Moreover, the collinear configurations along the $x$-axis
are identical (as they should). 
\item Some configuration shapes appear in all test examples. This result
suggests that the balanced configurations can be classified according
to the similarity of their shapes. 
\item The shapes of the central configurations are also similar with, for
example, the shapes of the balanced configurations in the case $\sigma_{y}=0.8$
(the shapes (1), (2), (3), (4), and (5) are similar to the shapes
(10), (7), (2), (5), and (9), respectively). 
\end{enumerate}
\begin{table}
\caption{The number of balanced configurations $N_{\textrm{sol}}$ for $n=5$
masses. \label{tab:NumberBC}}

\medskip{}

\centering{}%
\begin{tabular}{ccccccc}
\hline 
$\sigma_{y}$  & $N_{\textrm{sol}}$  & $N_{\textrm{s}}^{0}$  & $K$  & $k^{\star}$  & $k_{0}$  & $\begin{array}{c}
\textrm{Computational Time}\\
\textrm{hours:min:sec}
\end{array}$\tabularnewline
\hline 
0.1  & 10  & 1000  & 1000  & 500  & 5  & 0:00:35\tabularnewline
0.2  & 11  & 1000  & 1000  & 500  & 5  & 0:00:57\tabularnewline
0.3  & 12  & 1000  & 1000  & 500  & 8  & 0:00:58\tabularnewline
0.4  & 15  & 1000  & 1000  & 500  & 20  & 0:01:34\tabularnewline
0.5  & 12  & 1000  & 1000  & 500  & 32  & 0:01:42\tabularnewline
0.6  & 12  & 1000  & 1000  & 500  & 32  & 0:02:04\tabularnewline
0.7  & 10  & 1000  & 1000  & 500  & 55  & 0:03:05\tabularnewline
0.8  & 10  & 1000  & 1000  & 500  & 62  & 0:05:05\tabularnewline
\hline 
\end{tabular}
\end{table}

For $n<8$, all central configurations have at least one axis of symmetry.
Although we did not perform an exhaustive numerical analysis, we make
some preliminary comments related to balanced configurations without
any axis of symmetry. 
\begin{enumerate}
\item In Chenciner (2017) it was asked whether in the case $n=4$, balanced
configurations without any axis of symmetry exist. Through a numerical
analysis, we found that for $\sigma_{x}=1.0$ and some specific values
of $\sigma_{y}$, such balanced configurations exist (Fig. \ref{fig:ASYM4}). 
\item In the case $n=10$, there are $67$ central configurations from which
$11$ are without any axis of symmetry. For the same number of point
masses, there are much more balanced configurations, and accordingly,
much more asymmetrical configurations. As an example, we mention that
for $n=10$, $\sigma_{x}=1.0$ and $\sigma_{y}=0.3$, we found $270$
balanced configurations from which $90$ are without any axis of symmetry.
Some examples are illustrated in Fig. \ref{fig:Asym10}. Note that
these results were computed by using a Chaotic Method and the control
parameters $N_{\textrm{s}}^{0}=K=2000$ and $k^{\star}=500$; the
computational time was $59$ minutes and $45$ seconds. 
\end{enumerate}
\begin{figure}
\includegraphics[scale=0.6]{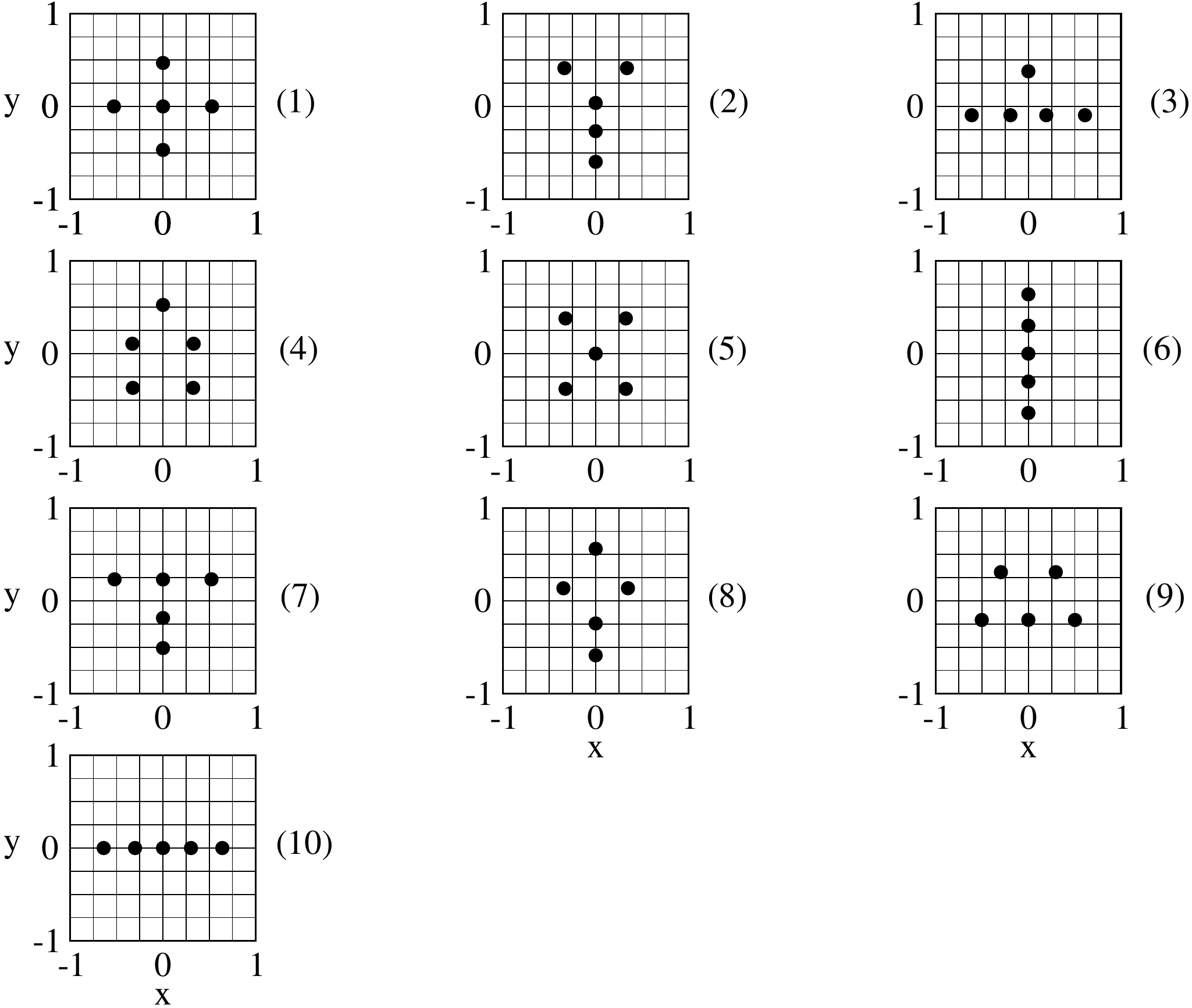}

\caption{Balanced configurations for five masses in the case $\sigma_{x}=1.0$
and $\sigma_{y}=0.1$. The $x_{i}$ and $y_{i}$ coordinates are normalized
according to the rules: $x_{i}\rightarrow\sqrt{m\sigma_{x}}x_{i}$
and $y_{i}\rightarrow\sqrt{m\sigma_{y}}y_{i}$, respectively. \label{fig:BalanceConfigurations01}}
\end{figure}

\begin{figure}
\includegraphics[scale=0.6]{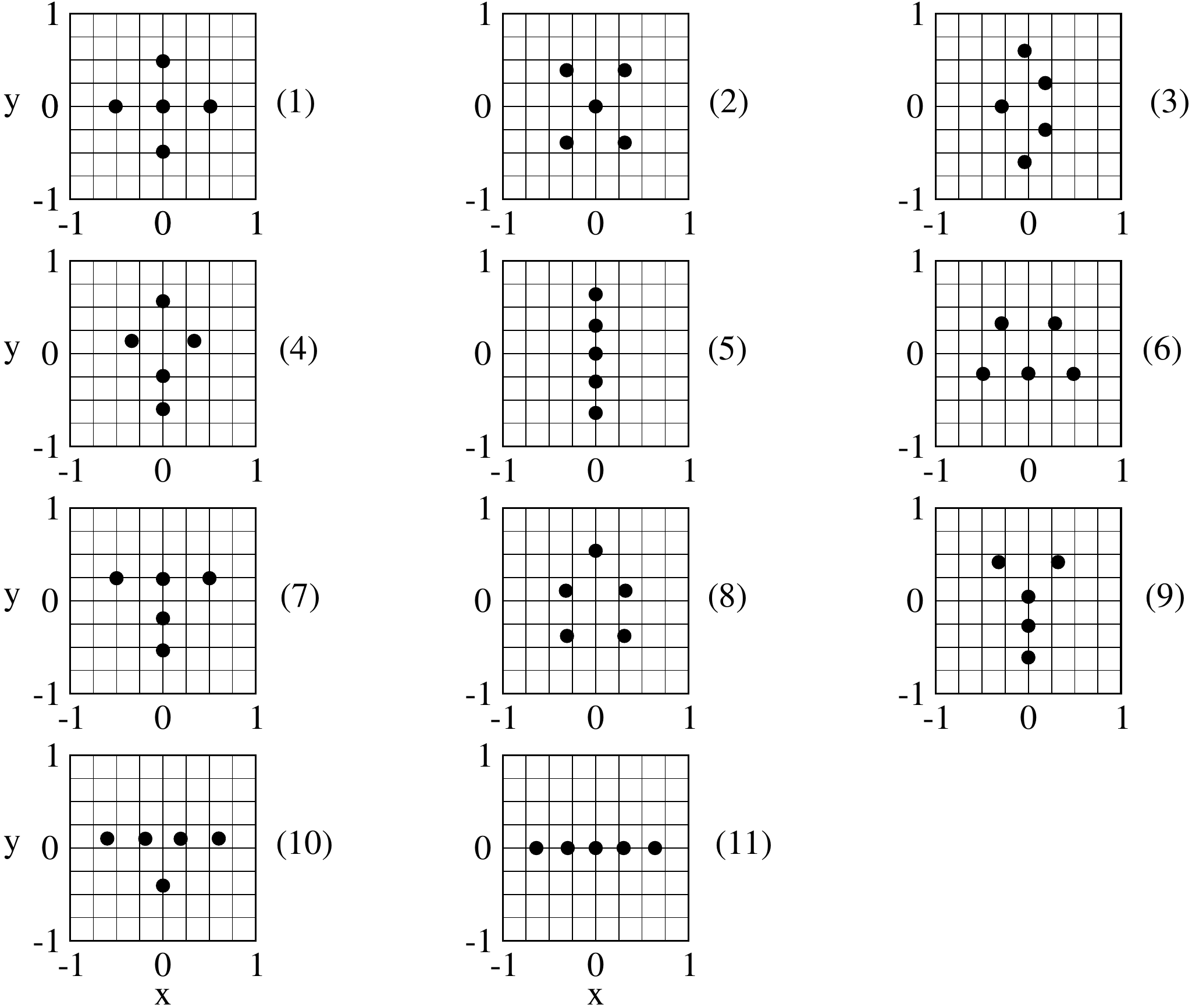}

\caption{The same as in Fig. \ref{fig:BalanceConfigurations01} but for $\sigma_{y}=0.2$.\label{fig:BC02}}
\end{figure}

\begin{figure}
\includegraphics[scale=0.6]{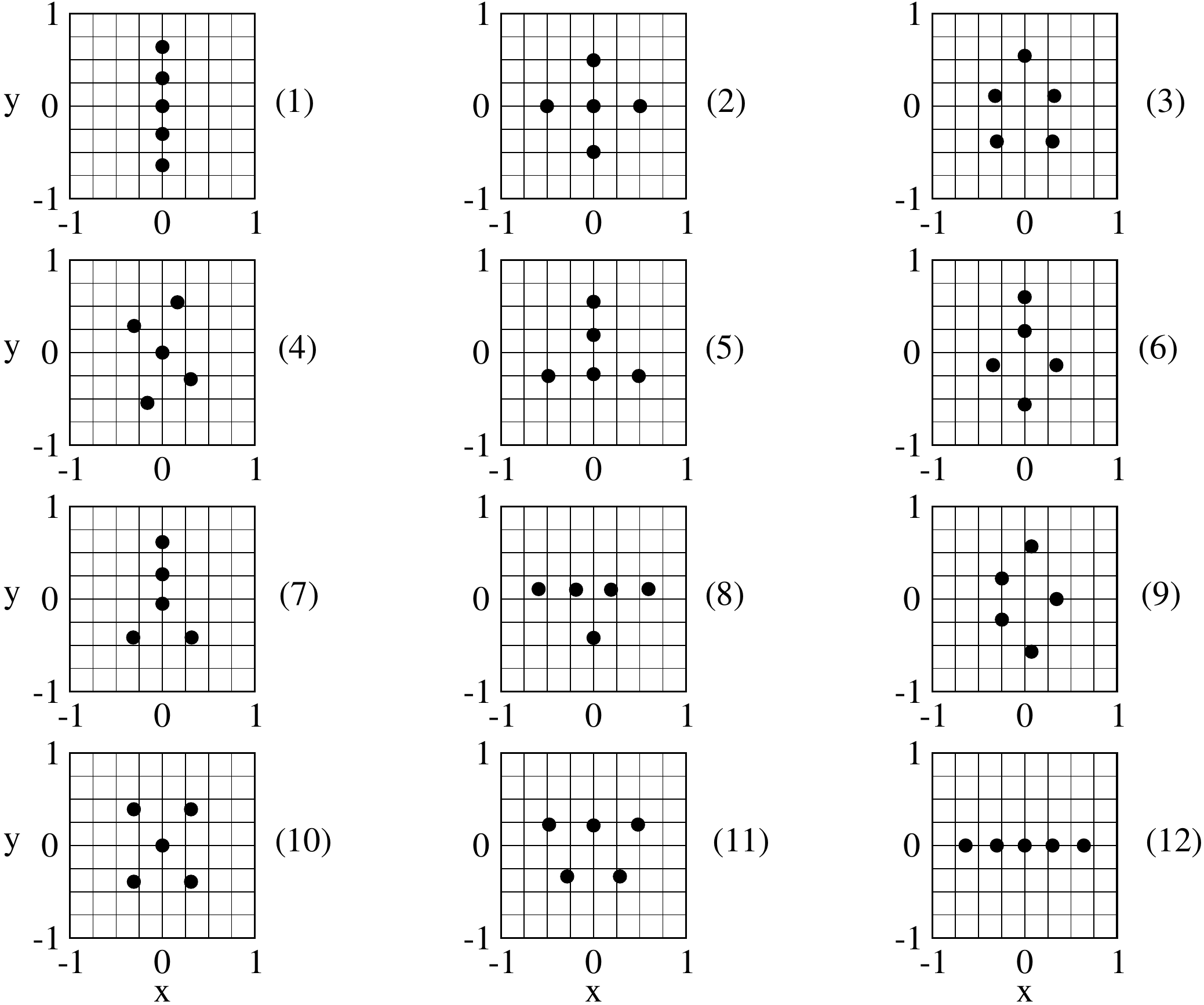}

\caption{The same as in Fig. \ref{fig:BalanceConfigurations01} but for $\sigma_{y}=0.3$.\label{fig:BC03}}
\end{figure}

\begin{figure}
\includegraphics[scale=0.6]{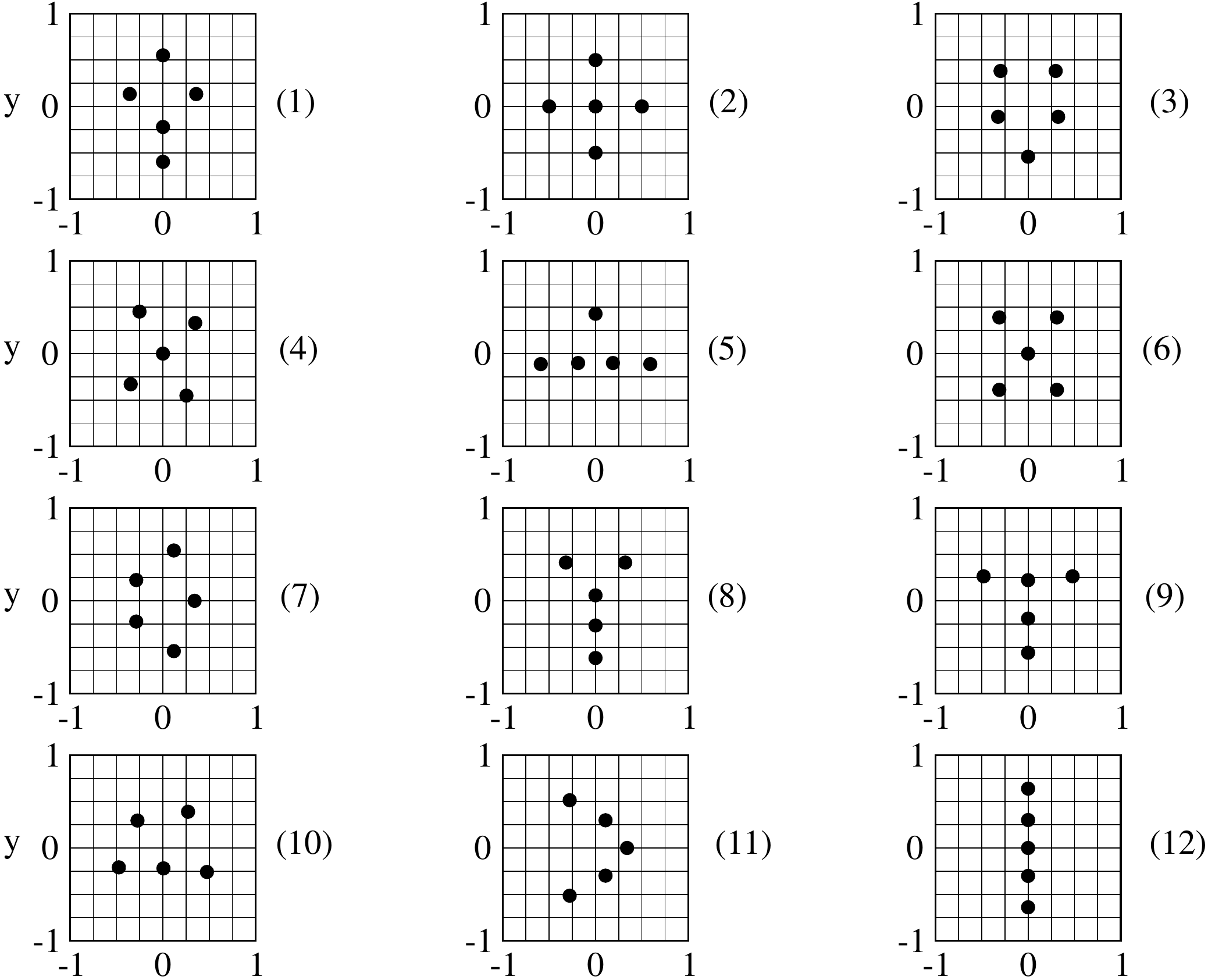}

\caption{The same as in Fig. \ref{fig:BalanceConfigurations01} but for $\sigma_{y}=0.4$.(continued
on next page)\label{fig:BC04}}
\end{figure}

\newpage{}

\begin{figure}
\includegraphics[scale=0.6]{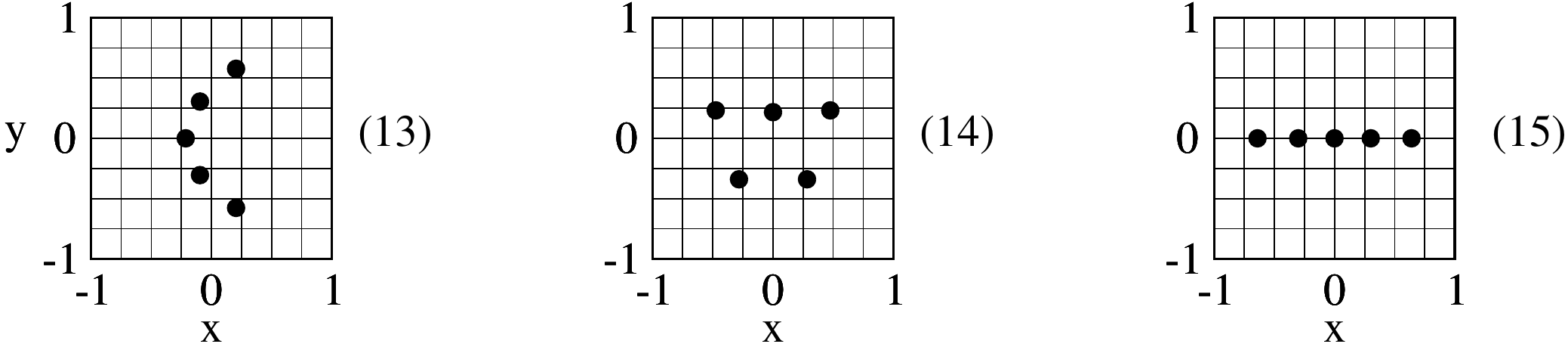}
\centering{}Continuation of Fig. \ref{fig:BC04}
\end{figure}

\section{Conclusions}

A stochastic optimization algorithm for analyzing planar central and
balanced configurations in the $n$-body problem has been developed.
The algorithm has been designed around the Minfinder method developed
by Tsoulos and Lagaris (2006) by including additional sampling and
local optimization methods. In the post-processing stage, several
solution tests have been incorporated. These are related to the fulfillment
of the normalization condition for the moment of inertia, the Albouy-Chenciner
equations, the Morse equality, and the uniqueness of the solutions.
Through a numerical analysis, we found an extensive list of central
configurations satisfying the Morse equality up to $n=12$. Although,
based on a random search, the algorithm is able to find the complete
list of central configurations (at least for $n\leq9$) with a low
computational time cost. For balanced configurations, we showed some
exemplary results in the case $n=5$, and some configurations without
any axis of symmetry in the cases $n=4$ and $n=10$.

The developed algorithm is versatile and has a wide range of application.
In addition to the Cartesian coordinates $(x_{i},y_{i})$, the masses
$m_{i}$, and the standard deviations $\sigma_{x}$ and $\sigma_{y}$
can be included in the inversion process. Actually, the unknown vector
$\mathbf{q}$ is defined as 
\[
\mathbf{q}=[(x_{1},y_{1}),\ldots,(x_{n},y_{n}),m_{1},\ldots,m_{n},\sigma_{x},\sigma_{y}]^{T},
\]
and a logical array specifies which components of the vector are considered
in the optimization process. Moreover, the algorithm can be directly
extended to spatial configurations and additional types of constraints,
similar to that given by Eq. (\ref{eq:AddConstraint}), can be taken
into account. In this way, various theoretical results, as for example,
the determination of central configurations for the $(n+1)$-body
problem or that of the so-called super central configurations (Xie
2010) for the $n$-body problem, can be first checked numerically. 

\pagebreak{}

\begin{figure}
\includegraphics[scale=0.6]{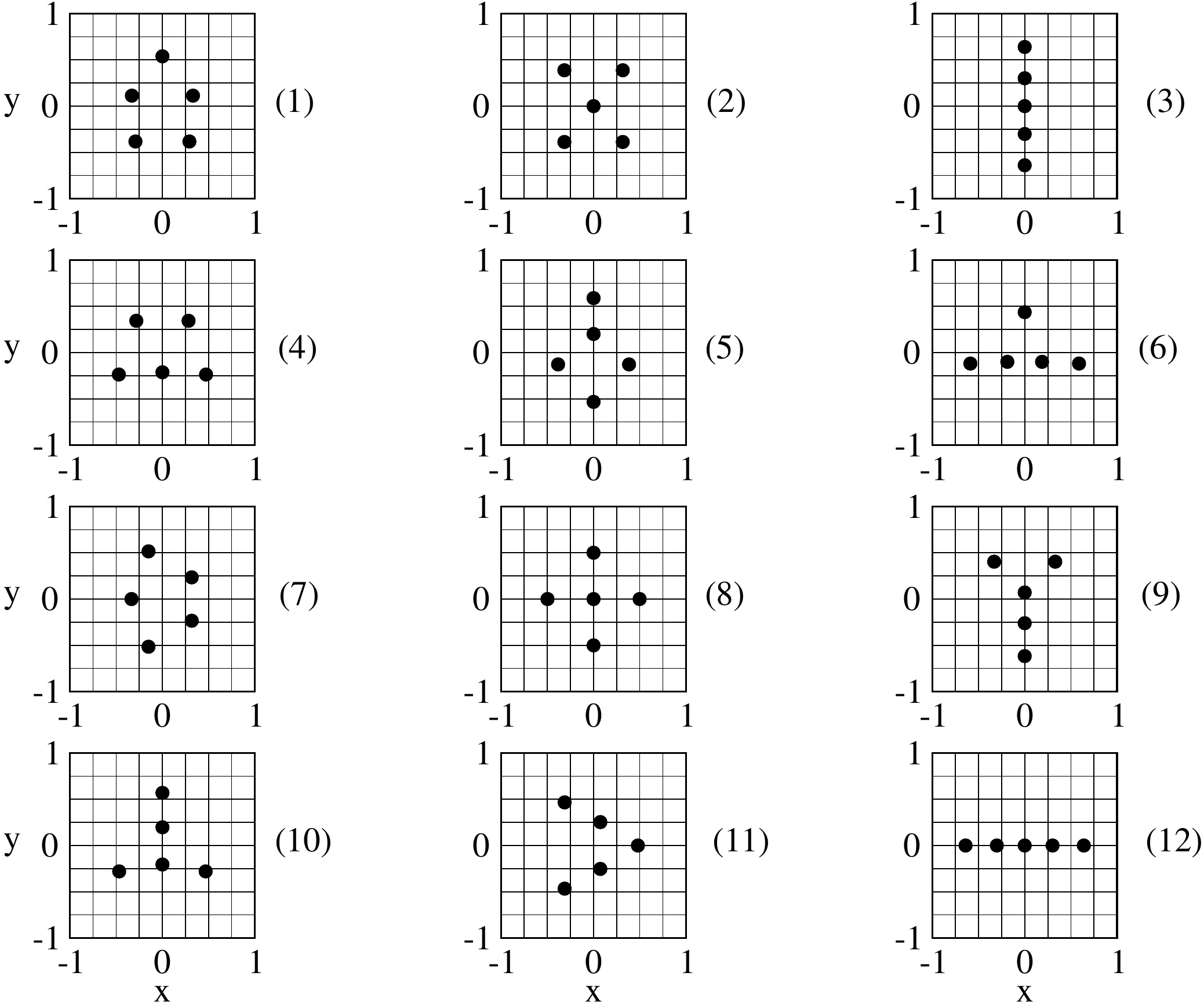}

\caption{The same as in Fig. \ref{fig:BalanceConfigurations01} but for $\sigma_{y}=0.5$.
\label{fig:BC05}}
\end{figure}

\pagebreak{}

\begin{figure}
\includegraphics[scale=0.6]{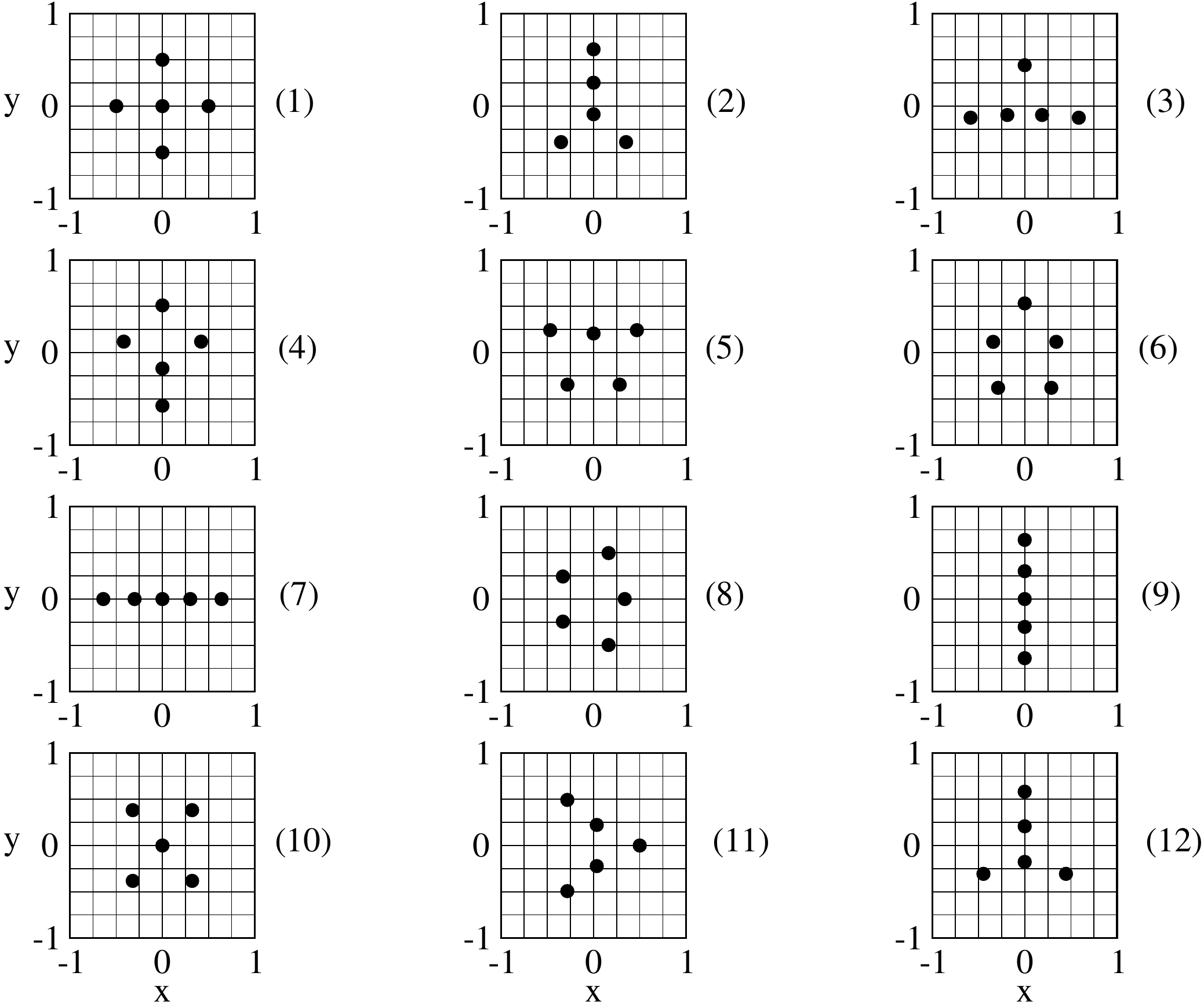}

\caption{The same as in Fig. \ref{fig:BalanceConfigurations01} but for $\sigma_{y}=0.6$.\label{fig:BC06} }
\end{figure}

\pagebreak{}

\begin{figure}
\includegraphics[scale=0.6]{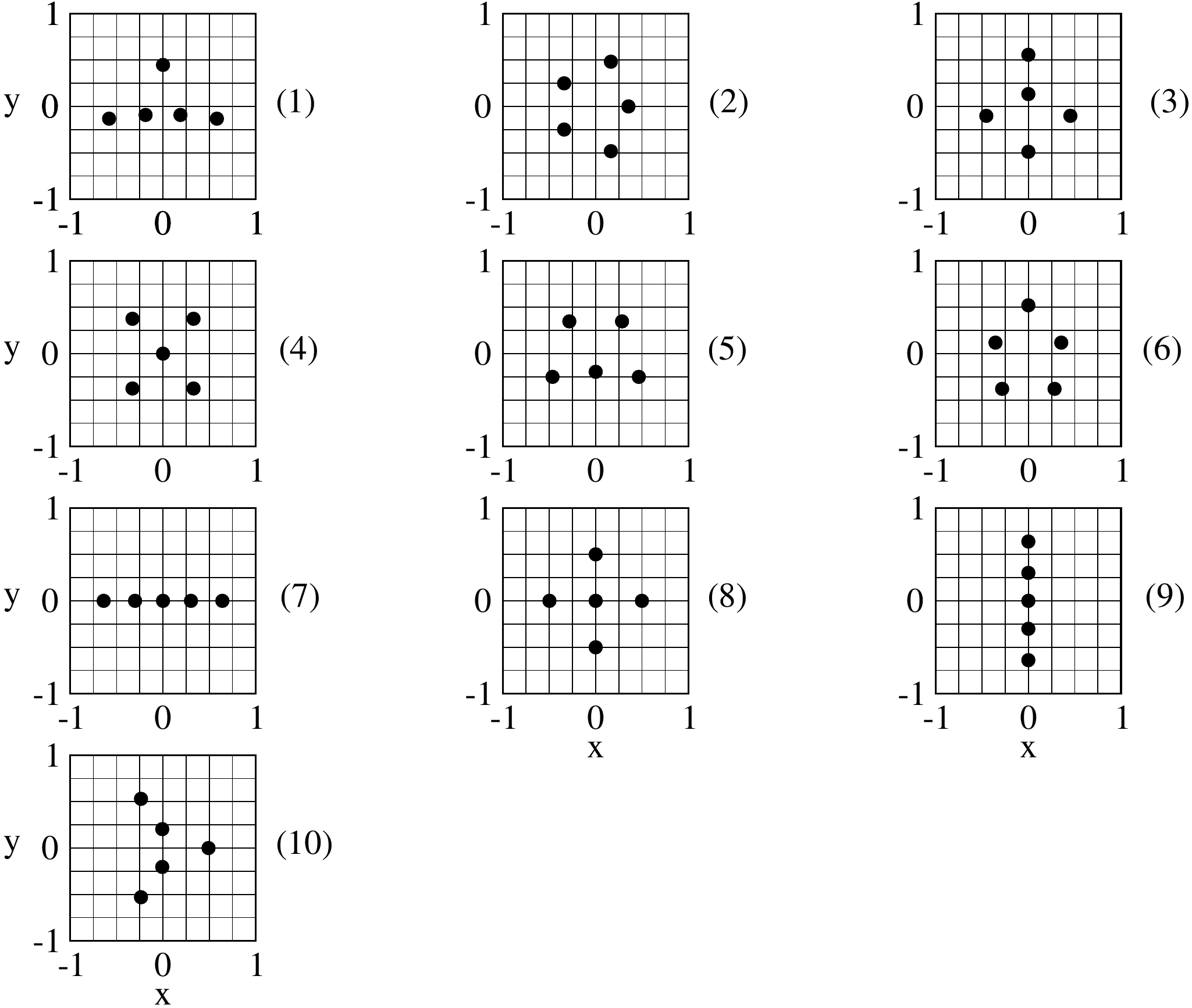}

\caption{The same as in Fig. \ref{fig:BalanceConfigurations01} but for $\sigma_{y}=0.7$.\label{fig:BC07}}
\end{figure}

\pagebreak{}

\begin{figure}
\includegraphics[scale=0.6]{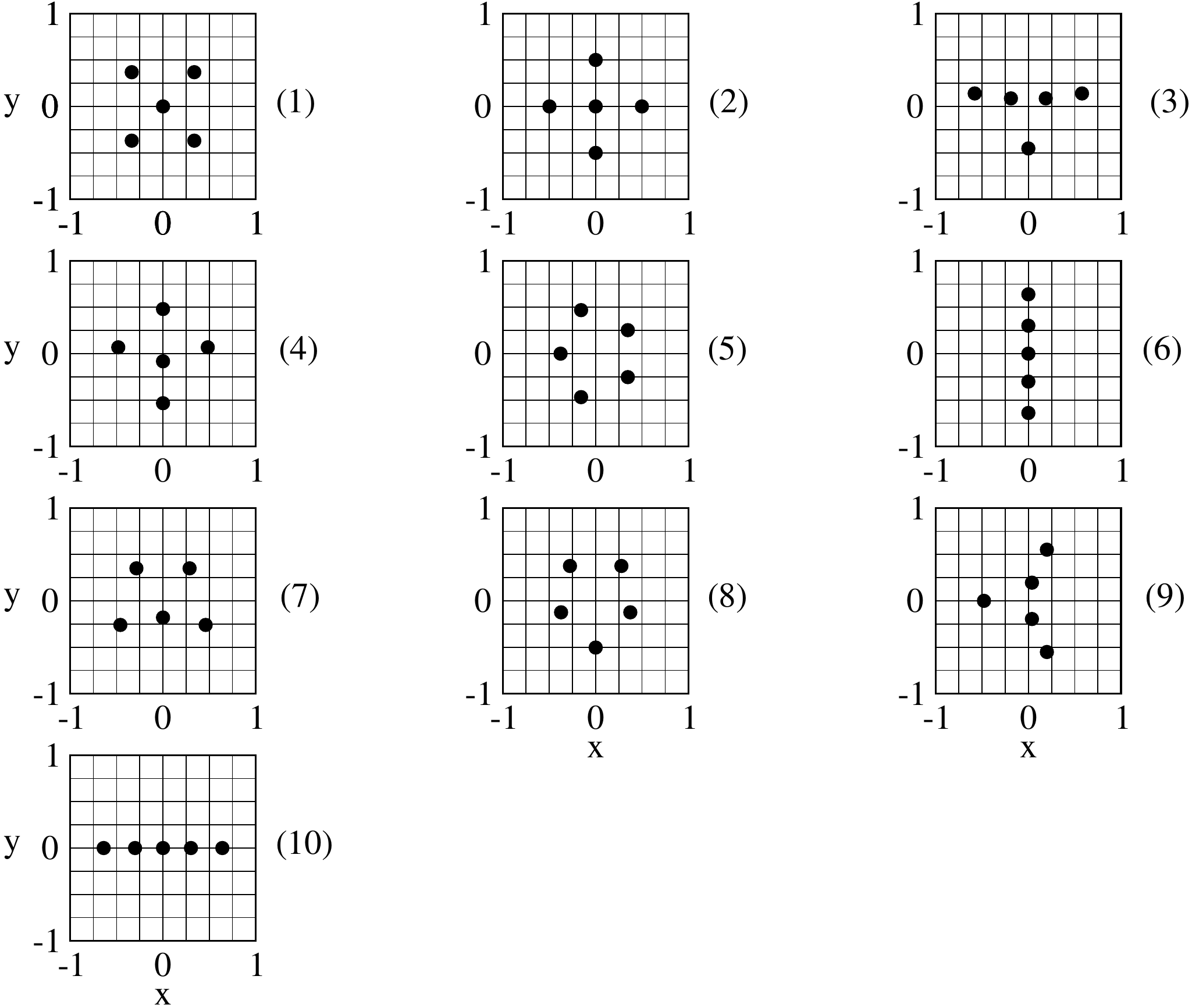}

\caption{The same as in Fig. \ref{fig:BalanceConfigurations01} but for $\sigma_{y}=0.8$.\label{fig:BC08}}
\end{figure}

\clearpage{}

\begin{figure}
\includegraphics[scale=0.6]{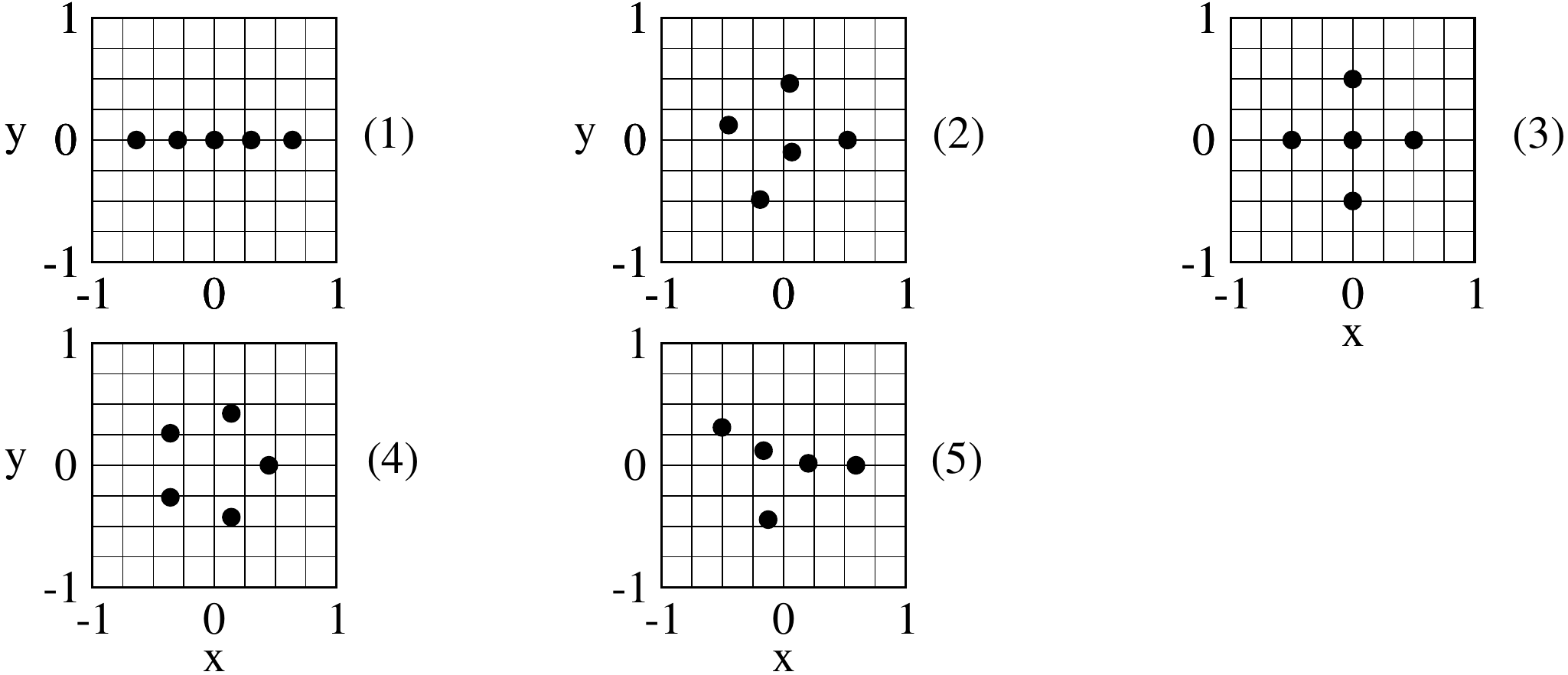}

\caption{Central configurations ($\sigma_{x}=\sigma_{y}=1.0$) for five masses.
The $x_{i}$ and $y_{i}$ coordinates are normalized as in Fig. \ref{fig:BalanceConfigurations01}.
\label{fig:Central-configurations}}
\end{figure}

\begin{figure}
\includegraphics[scale=0.5]{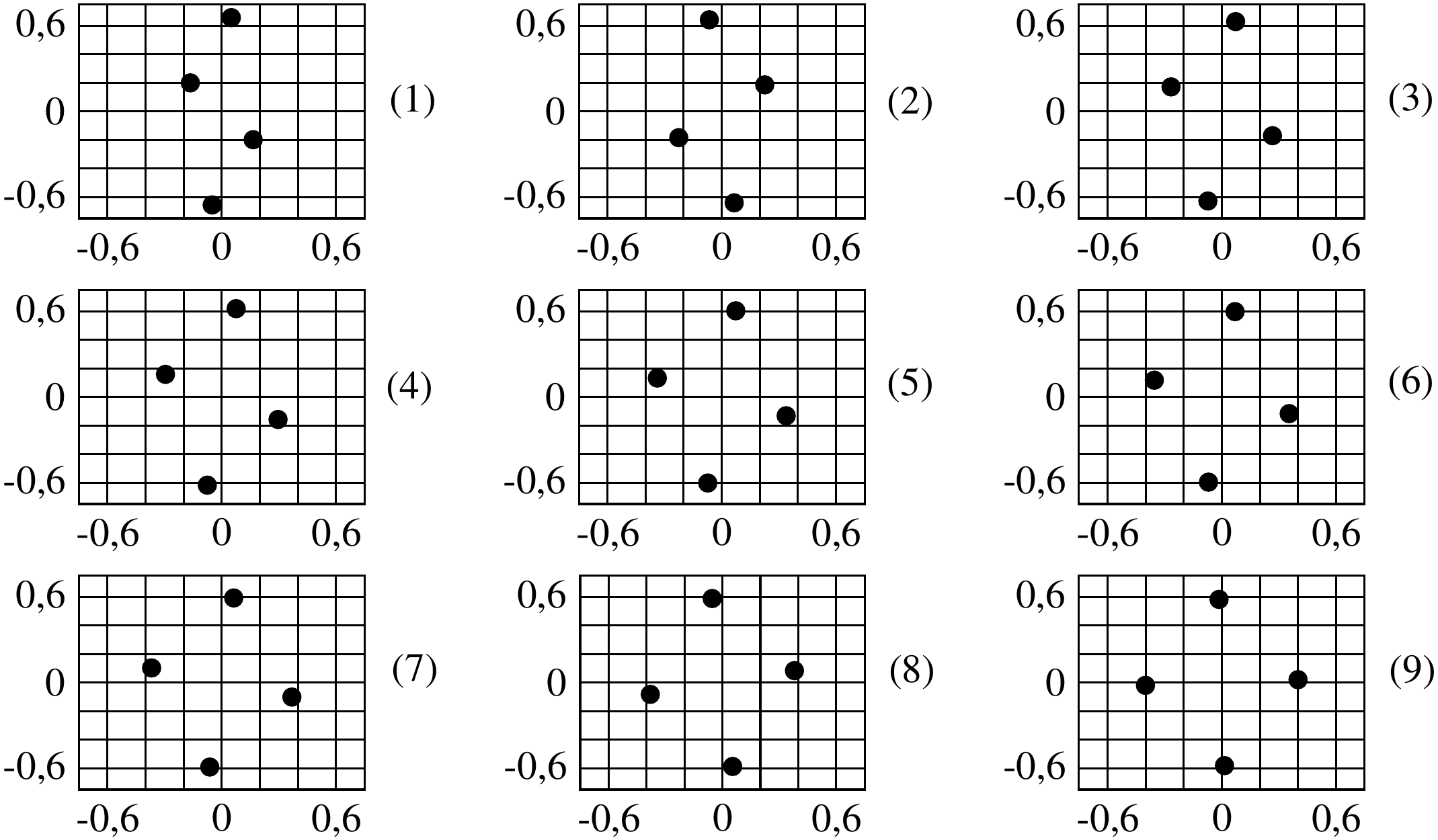}

\caption{Balanced configurations for four equal masses without any axis of
symmetry. They seem to be centrally symmetric. In these simulations,
$\sigma_{x}=1.0$ and $\sigma_{y}$ is increased from $0.25$ (panel
1) to $0.33$ (panel 9) in steps of $0.01$.\label{fig:ASYM4} }
\end{figure}

\clearpage{}

\begin{figure}
\includegraphics[scale=0.5]{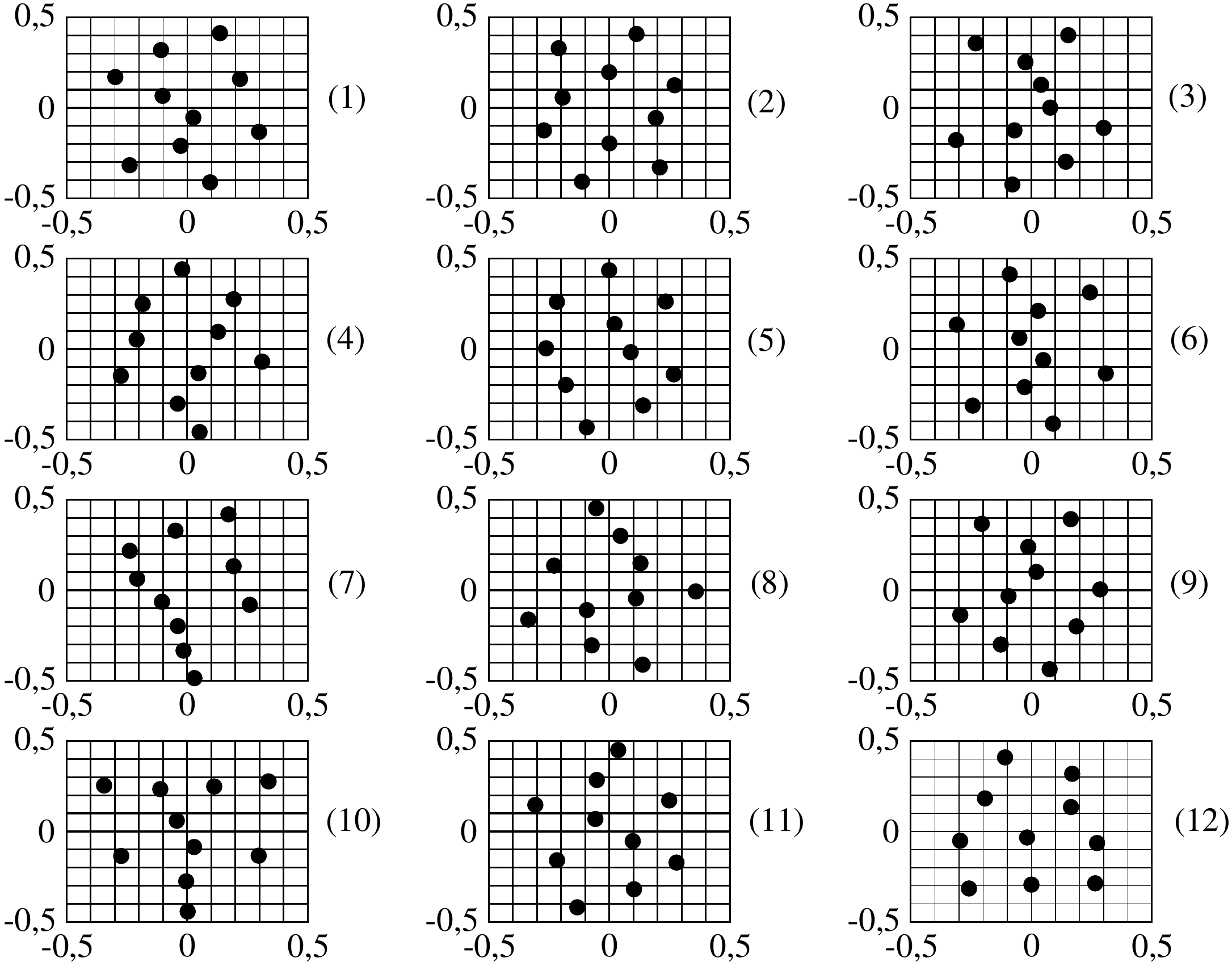}

\caption{Some balanced configurations without any symmetry for ten masses in
the case $\sigma_{x}=1.0$ and $\sigma_{y}=0.3$.\textcolor{blue}{{}
\label{fig:Asym10}}}
\end{figure}

%\bibitem{Albouy}\textcolor{red}{Albouy A. Chenciner A. Le probleme
%des $n$ corps et les distances mutuells. Invent. Math. 131 (1998)
%151\textendash 184.}

\begin{acknowledgement*}
The authors thank Piotr Zgliczynski and Malgorzata Moczurad for their
helpful comments and critics, which bring various improvements of
this work, and for the independent verification of our solutions with
their code based on the Krawczyk operator method. Alexandru Doicu
and Lei Zhao were supported by DFG ZH 605/1-1.
\end{acknowledgement*}

\end{document}